\newtheorem{thm}{Theorem}[section]
\newtheorem{prop}[thm]{Proposition}
\newtheorem{lem}[thm]{Lemma}
\newtheorem{example}[thm]{Example}
\newtheorem{defn}[thm]{Definition}
\newtheorem{conj}[thm]{Conjecture}
\newtheorem{conv}[thm]{Convention}
\def \a {\alpha}
\def \G {W(\X)}
\def \ha {\widehat{\alpha}}
\def \wh {\widehat}
\def \X {\bold{X}}
\def \Y {\bold{Y}}
\title{Weak ergodic averages over dilated measures}
\author{Wenbo Sun}
\address{Department of Mathematics, The Ohio State University, 231 West 18th Avenue, Columbus OH, 43210-1174, USA}
\email{sun.1991@osu.edu}
\begin{document}
	
\maketitle	
	
\begin{abstract}
	Let $m\in\mathbb{N}$ and $\X=(X,\mathcal{X},\mu,(T_{\alpha})_{\alpha\in\mathbb{R}^{m}})$ be a measure preserving system with an $\mathbb{R}^{m}$-action.  We say that a Borel measure $\nu$ on $\mathbb{R}^{m}$ is weakly equidistributed for $\X$ if there exists $A\subseteq\mathbb{R}$ of density 1 such that for all $f\in L^{\infty}(\mu)$, we have
	$$\lim_{t\in A,t\to\infty}\int_{\mathbb{R}^{m}}f(T_{t \alpha}x)\,d\nu(\alpha)=\int_{X}f\,d\mu$$
	for $\mu$-a.e. $x\in X$. 
	
	Let $\G$ denote the collection of all $\a\in\mathbb{R}^{m}$ such that the $\mathbb{R}$-action $(T_{t\a})_{t\in\mathbb{R}}$ is not ergodic.  Under the assumption of the pointwise convergence of double Birkhoff ergodic average, we show that a Borel measure $\nu$ on $\mathbb{R}^{m}$ is weakly equidistributed for an ergodic system $\X$ if and only if $\nu(\G+\beta)=0$ for every $\beta\in\mathbb{R}^{m}$. Under the same assumption, we also show that $\nu$ is weakly equidistributed for all ergodic measure preserving systems with $\mathbb{R}^{m}$-actions if and only if $\nu(\ell)=0$ for all hyperplanes $\ell$ of $\mathbb{R}^{m}$. 
	
	Unlike many equidistribution results in literature whose proofs use methods from harmonic analysis, our results adopt a purely ergodic theoretic approach.
\end{abstract}	
	
\section{introduction}
\subsection{Strong equidistribution over dilated measures}
Let $G$ be a locally compact Hausdorff topological group. A \emph{measure preserving $G$-system} (or a \emph{$G$-system}) is a tuple $\X=(X,\mathcal{X},\mu,(T_{g})_{g\in G})$, where $(X,\mathcal{X},\mu)$ is a separable probability space and $T_{g}\colon X\to X, g\in G$ are measurable and  measure preserving transformations such that $T_{g}\circ T_{h}=T_{gh}$, $T_{e_{G}}=id$ for all $g,h\in G$. We also require that for all $x\in X$, the map $G\to X, g\to T_{g}x$ is measurable. 
We say that $\X$ is \emph{ergodic} if $A\in\mathcal{X}, T_{g}A=A$ for all $g\in G$ implies that $\mu(A)=0$ or 1. 

Let $m\in\mathbb{N}$, $\nu$ be a Borel measure on $\mathbb{R}^{m}$ and $\X=(X,\mathcal{X},\mu,(T_{g})_{g\in \mathbb{R}^{m}})$ be an ergodic $\mathbb{R}^{m}$-system.\footnote{Throughout this paper, we assume that $\nu(\mathbb{R}^{m})=1$.} We say that $\nu$ is \emph{(strongly) equidistributed} for $\X$ if for all $f\in L^{\infty}(\mu)$ 
we have 
$$\lim_{t\to\infty}\int_{\mathbb{R}^{m}}f(T_{t \alpha}x)\,d\nu(\alpha)=\int_{X}f\,d\mu$$
for $\mu$-a.e. $x\in X$.
  Birkhoff ergodic theorem for $\mathbb{R}$-systems (see for example Corollary 8.15 \cite{ET}) states that for every ergodic $\mathbb{R}$-system $(X,\mathcal{X},\mu,(T_{t})_{t\in \mathbb{R}})$ and every $f\in L^{\infty}(\mu)$, $$\lim_{T\to\infty}\frac{1}{T}\int_{0}^{T}f(T_{t }x)=\int_{X}f\,d\mu$$
  for $\mu$-a.e. $x\in X$, which
  is equivalent to say that the Lebesgue measure restricted to the interval $[-1,1]$ is equidistributed for every ergodic $\mathbb{R}$-system. Similarly, Birkhoff ergodic theorem holds for  $\mathbb{R}^{m}$-system for all $m\in\mathbb{N}$: the Lebesgue measure restricted to the unit cube or ball in $\mathbb{R}^{m}$ is equidistributed for every ergodic $\mathbb{R}^{m}$-system (see for example Theorem 8.19 \cite{ET}).

It is an interesting to ask if  similar results hold for the Lebesgue measure restricted to the boundary of the unit cube or ball. 
The motivation of this question was from a result of Stein \cite{St}
in 1976, who showed that for $\phi\in L^{p}(\mathbb{R}^{m}), p>m/(m-1), m\geq 3$, and for Lebesgue-a.e. $x\in\mathbb{R}^{m}$, we have that
$$\lim_{t\to 0}\int_{S_{t}}\phi(x-u)\,d\sigma_{m,t}(u)=\phi(x),$$ 
where $\sigma_{m,t}$ is the Lebesgue measure on $\mathbb{R}^{m}$ restricted to $S_{t}, $ the sphere of radius $t$ centered at the origin. Later an analog of this result was proved in the ergodic theoretic setting. It was proved by Jones \cite{J} (for $m\geq 3$) and Lacey \cite{dis2} (for $m=2$) that the Lebesgue measure restricted to the boundary of the unit ball $\sigma_{m,1}$ is equidistributed for all ergodic $\mathbb{R}^{m}$-systems. We remark that on the other hand, it is not hard to see that the Lebesgue measure restricted to the boundary of the unit cube is not equidistributed for some ergodic $\mathbb{R}^{m}$-systems.
It is then natural to ask which measure $\nu$ on $\mathbb{R}^{m}$ is  equidistributed for all ergodic $\mathbb{R}^{m}$-systems. It was proved by Bj\"orklund \cite{Bj} that if $\nu$ has \emph{Fourier dimension} $a>1$, meaning that $a$ is the supremium over all $0\leq a\leq d$ such that $\lim_{\zeta\to\infty}\vert\widehat{\nu}(\zeta)\vert\cdot\vert\zeta\vert^{a/2}<\infty,$ then  $\nu$  is  equidistributed for all ergodic $\mathbb{R}^{m}$-systems. It is worth noting that strong equidistribution for polynomial maps on special homogeneous systems have also been studied in recent years (see \cite{KSS,SY} for example).

\subsection{Weak equidistribution over dilated measures}
In contrast to the strong equidistribution, a notion called
``weakly equidistribution" were studied recently, and various results were obtained in the settings of translation surfaces \cite{CH} and nil manifolds \cite{KSS}. 
To be more precise,  we say that a Borel measure $\nu$ on $\mathbb{R}^{m}$ is \emph{weakly equidistributed} for  an  $\mathbb{R}^{m}$-system $\X$ if there exists $A\subseteq\mathbb{R}$ of density 1 such that for all $f\in L^{\infty}(\mu)$, we have
$$\lim_{t\in A,t\to\infty}\int_{\mathbb{R}^{m}}f(T_{t \alpha}x)\,d\nu(\alpha)=\int_{X}f\,d\mu$$
for $\mu$-a.e. $x\in X$.   
By the examples in Section 5 of \cite{KSS}, strong and weak equidistributions are not equivalent conditions. 

It is natural to ask which measures $\nu$ on $\mathbb{R}^{m}$ are weakly equidistributed for all ergodic $\mathbb{R}^{m}$-systems.
In this paper, we provide a necessary and sufficient condition for such $\nu$ 
 under the assumption of the pointwise convergence of double Birkhoff ergodic average. We say that an $\mathbb{R}^{m}$-system $(X,\mathcal{X},\mu,(T_{g})_{g\in \mathbb{R}^{m}})$ is \emph{good for double Birkhoff averages} if for all $f_{1},f_{2}\in L^{\infty}(\mu)$ and $\alpha_{1},\alpha_{2}\in\mathbb{R}^{m}$, the limit
$$\lim_{T\to\infty}\frac{1}{T}\int_{0}^{T}f_{1}(T_{\alpha_{1}t}x)f_{2}(T_{\alpha_{2}t}x)\,dt$$ 
exists for $\mu$-a.e. $x\in X$.

Our first theorem is the following:
\begin{thm}\label{main}
	Let $m\in\mathbb{N}$. A Borel measure $\nu$ on $\mathbb{R}^{m}$ is weakly equidistributed for all ergodic $\mathbb{R}^{m}$-systems which are good for double Birkhoff averages if and only if $\nu(\ell)=0$ for all hyperplanes\footnote{A \emph{hyperplane} of $\mathbb{R}^{m}$ is $V+\beta$ for some subspace $V$ of $\mathbb{R}^{m}$ of co dimension 1 and $\beta\in\mathbb{R}^{m}$.} $\ell$ of $\mathbb{R}^{m}$.
\end{thm}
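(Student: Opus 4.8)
The plan is to prove both directions of the equivalence, and to reduce the ``hard'' (sufficiency) direction to a suitable version of the single-system criterion governed by the set $\G$. I expect the paper proves (or will prove) a companion statement for a fixed ergodic $\X$ asserting that $\nu$ is weakly equidistributed for $\X$ iff $\nu(\G+\beta)=0$ for all $\beta$; the strategy here is to connect ``$\nu(\ell)=0$ for all hyperplanes $\ell$'' to ``$\nu(\G+\beta)=0$ for all $\beta$, for every ergodic $\X$ that is good for double Birkhoff averages.''

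\smallskip

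\emph{Necessity.} Suppose $\nu(\ell)>0$ for some hyperplane $\ell=V+\beta$ with $V$ of codimension $1$. I would construct an explicit ergodic $\mathbb{R}^m$-system that is good for double Birkhoff averages and for which $\nu$ fails to be weakly equidistributed. The natural candidate is a one-dimensional torus rotation pulled back through a linear functional: pick a linear map $\lambda\colon\mathbb{R}^m\to\mathbb{R}$ with kernel $V$ and consider $X=\mathbb{R}/\mathbb{Z}$ with $T_\gamma x = x+\lambda(\gamma)$. This is ergodic (when $\lambda\not\equiv 0$ and suitably normalized) and, being a rotation, all double Birkhoff averages converge pointwise, so it is good for double Birkhoff averages. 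For $f(x)=e^{2\pi i x}$ one computes
\[
\int_{\mathbb{R}^m} f(T_{t\alpha}x)\,d\nu(\alpha)=e^{2\pi i x}\int_{\mathbb{R}^m}e^{2\pi i t\lambda(\alpha)}\,d\nu(\alpha),
\]
and the positive mass of $\nu$ on the affine subspace $\{\lambda(\alpha)=\lambda(\beta)\}$ (a translate of $\ker\lambda=V$) prevents this integral from tending to $\int f\,d\mu=0$ along any density-one set of $t$: the contribution $e^{2\pi i t\lambda(\beta)}\nu(\ell)$ is an oscillating term of fixed modulus $\nu(\ell)>0$, and on a density-one set of $t$ it stays bounded away from $0$. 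Making this last step precise — extracting a density-one set of $t$ on which $|e^{2\pi it\lambda(\beta)}\nu(\ell)+\text{(rest)}|$ is bounded below — is a routine equidistribution argument (the ``rest'' is $o(1)$ in a Cesàro sense, hence small off a density-zero set).

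\smallskip

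\emph{Sufficiency.} Assume $\nu(\ell)=0$ for every hyperplane $\ell$. Fix an arbitrary ergodic $\mathbb{R}^m$-system $\X$ that is good for double Birkhoff averages. I would invoke the single-system criterion (the companion theorem with the set $\G$): it suffices to show $\nu(\G+\beta)=0$ for every $\beta\in\mathbb{R}^m$. The key structural fact I would establish is that $\G=W(\X)$, the set of $\alpha$ for which $(T_{t\alpha})_{t\in\mathbb{R}}$ is non-ergodic, is contained in a countable union of hyperplanes through the origin. Indeed, by the spectral theorem / Mackey theory for the $\mathbb{R}^m$-action there is a countable set $\{\chi_j\}$ of characters of $\mathbb{R}^m$ (the point spectrum generators together with the obstruction to weak mixing) such that the line $\mathbb{R}\alpha$ acts non-ergodically precisely when $\chi_j(\alpha)=0$ for some $j$, i.e.\ $\alpha$ lies in the codimension-$1$ subspace $\ker\chi_j$ (after discarding the trivial character). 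Thus $\G\subseteq\bigcup_j \ker\chi_j$, a countable union of hyperplanes, whence $\G+\beta$ is a countable union of hyperplanes for every $\beta$, and $\nu(\G+\beta)=0$ follows from the hypothesis and countable subadditivity. Applying the single-system criterion to each such $\X$ completes the proof.

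\smallskip

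\emph{Main obstacle.} The delicate point is the structural claim $\G\subseteq\bigcup_j\ker\chi_j$ with countably many $\chi_j$: one must argue that non-ergodicity of the restricted $\mathbb{R}$-action is detected by a \emph{countable} family of linear conditions. This uses that the Koopman representation of the $\mathbb{R}^m$-action on the separable space $L^2(\mu)$ decomposes with at most countably many ``ergodicity-obstructing'' characters — the eigenvalue characters of the $\mathbb{R}^m$-action, plus a Mackey/ergodic-decomposition argument handling the continuous-spectrum part (where a generic line acts ergodically, the exceptions again lying in countably many hyperplanes coming from the ergodic decomposition of intermediate factors). Separability of $(X,\mathcal X,\mu)$ is what makes ``countably many'' work. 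Everything else — the two reductions above and the rotation counterexample — is comparatively routine once the single-system criterion is in hand.
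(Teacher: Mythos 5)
Your proposal is correct and follows essentially the same route as the paper: the sufficiency direction is precisely the paper's reduction to the single-system criterion (Theorem \ref{main3}) combined with the fact that $W(\X)$ is contained in countably many hyperplanes (Theorem \ref{geo}, proved via the Pugh--Shub spectral argument you sketch; in fact Lemma \ref{PS2} already suffices for this application), and the necessity direction uses the same pulled-back circle rotation $T_{\alpha}=S_{\alpha\cdot\beta}$. The only point to tighten is your claim that the ``rest'' of the Fourier integral is $o(1)$ off a density-zero set --- the pushforward $\lambda_{*}\nu$ may have further atoms --- but applying Wiener's lemma to the entire discrete part (which is in effect what the paper does in the second half of the proof of Theorem \ref{main3}) gives $\limsup_{T\to\infty}\frac{1}{T}\int_{0}^{T}\bigl\vert\int_{\mathbb{R}^{m}}e^{2\pi it\lambda(\alpha)}\,d\nu(\alpha)\bigr\vert^{2}\,dt\geq\nu(\ell)^{2}>0$, which rules out convergence to $0$ along a density-one set.
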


Theorem \ref{main} will provide a complete answer for the weak equidistribution problem if the following conjecture holds:
\begin{conj}\label{main2}
	Every  $\mathbb{R}^{m}$-system is good for double Birkhoff averages.
\end{conj}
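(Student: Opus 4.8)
The plan is to fix $f_1,f_2\in L^\infty(\mu)$ and $\alpha_1,\alpha_2\in\mathbb{R}^m$ and reduce the conjecture to a statement about a bilinear ergodic average for the pair of commuting flows $S_t:=T_{\alpha_1 t}$ and $R_t:=T_{\alpha_2 t}$. Using the ergodic decomposition of $\mu$ with respect to the $\mathbb{R}^2$-subaction $(s,u)\mapsto T_{\alpha_1 s+\alpha_2 u}$ generated by $\alpha_1,\alpha_2$, together with Fubini over the decomposition, I would first reduce to the case where this subaction is ergodic, since pointwise a.e.\ convergence on each component integrates to pointwise a.e.\ convergence on $X$. The conceptual difficulty is already visible at this stage: the quantity
\[
\frac1T\int_0^T f_1(S_t x)\,f_2(R_t x)\,dt
\]
is the Birkhoff average of $f_1\otimes f_2$ for the product flow $S_t\times R_t$ on $X\times X$, but evaluated along the diagonal $\{(x,x)\}$, which is a null set for $\mu\times\mu$. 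Thus Birkhoff's theorem for the product flow yields no information, and one is genuinely facing a nonconventional average.

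Next I would dispose of the degenerate configurations. If $\alpha_1=0$ (or $\alpha_2=0$) the average equals $f_1(x)$ times a single Birkhoff average and converges by the Birkhoff theorem quoted in the introduction. If $\alpha_1$ and $\alpha_2$ are linearly dependent, write $\alpha_2=c\alpha_1$ and set $\psi_s:=T_{\alpha_1 s}$; the average becomes $\frac1T\int_0^T f_1(\psi_t x)f_2(\psi_{ct}x)\,dt$, a bilinear average for the \emph{single} flow $\psi$. Here both sample points lie on one $\psi$-orbit, so this is the continuous-time analogue of Bourgain's double recurrence theorem, which I would obtain from the discrete theorem for the time-one map $\psi_1$ by Calder\'on transference together with a maximal inequality controlling the fractional parts of $t$ and $ct$. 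This settles every configuration in which $\alpha_1,\alpha_2$ are parallel.

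The heart of the matter is the case where $\alpha_1,\alpha_2$ are linearly independent, so that $S_t$ and $R_t$ are genuinely independent commuting flows and the sample points $S_t x$, $R_t x$ lie on distinct orbits. Here I would follow the standard two-step scheme. First, establish $L^2(\mu)$-norm convergence, which for length-two commuting averages is available from the theory of characteristic factors (Conze--Lesigne, Host--Kra, Tao), reducing the norm limit to an explicit average on a distal characteristic factor of Kronecker or Conze--Lesigne type. Second, upgrade to pointwise convergence by proving that the projection onto the characteristic factor converges pointwise (via continuous equidistribution and a Wiener--Wintner-type uniformity estimate) while the complementary part converges to $0$ almost everywhere. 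The second step hinges on a \emph{bilinear maximal} (or oscillation/variational) inequality of the form $\big\|\sup_T\frac1T\int_0^T f_1(S_t\cdot)f_2(R_t\cdot)\,dt\big\|$ bounded in terms of $\|f_1\|_\infty$ and $\|f_2\|_\infty$, which is precisely what converts mean convergence plus convergence on a dense class into almost-everywhere convergence.

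That bilinear maximal inequality is exactly where I expect the argument to stall, and it is the reason the statement is posed as a conjecture rather than a theorem. For two independent commuting flows it is the continuous-time counterpart of the a.e.\ convergence of $\frac1N\sum_{n\le N}f(S^n x)g(T^n x)$ for commuting transformations $S,T$, a well-known open problem: the time-frequency (Lacey--Thiele) methods that control the single-flow bilinear maximal function do not presently extend to the two-parameter ``diagonal'' sampling, and no maximal inequality is known that survives restriction to the diagonal of the product system. A complete proof therefore seems to demand either a new harmonic-analytic input of Lacey--Thiele type adapted to this diagonal geometry, or a purely ergodic-theoretic breakthrough on the commuting maximal inequality; the remaining ingredients of the scheme above are, I believe, within reach of existing technology.
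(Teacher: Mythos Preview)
The paper does not prove this statement: it is explicitly labelled a \emph{conjecture} and the text immediately following it says ``Conjecture \ref{main2} is still an open question in ergodic theory.'' There is therefore no proof in the paper to compare your attempt against.

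Your proposal is not a proof either, and to your credit you say so yourself. The outline you give is a reasonable survey of the landscape: the degenerate configurations (one direction zero, or the two directions parallel) do reduce to Birkhoff and to Bourgain's double-recurrence theorem respectively, and for linearly independent $\alpha_1,\alpha_2$ the problem is exactly the continuous-time version of the open question of pointwise convergence of $\frac{1}{N}\sum_{n<N} f_1(S^n x)f_2(T^n x)$ for commuting transformations. Your identification of the missing ingredient---a bilinear maximal or variational inequality for the diagonal action of two genuinely independent commuting flows---is precisely the known obstruction, and no such inequality is currently available. So the ``gap'' in your argument is not a flaw in your reasoning but the open problem itself, which is why the paper states this as a conjecture and then works under it as a hypothesis in Theorems~\ref{main} and~\ref{main3}.

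One small caution on your reduction step: passing pointwise convergence through an ergodic decomposition requires that the null exceptional sets vary measurably over the fibres; this is routine here (Fubini over the disintegration handles it), but it is worth stating rather than leaving implicit.
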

Conjecture \ref{main2} is still an open question in ergodic theory. Nevertheless,
various partial results on Conjecture \ref{main2} were obtain in recent years for some special type of systems, and so this paper can be viewed as an application of these results. We defer the discussion of this topic to Section \ref{s21}.

Another question we study in this paper is the necessary and sufficient conditions for a Borel measure $\nu$ on  $\mathbb{R}^{m}$ to be weakly equidistributed on a particular $\mathbb{R}^{m}$-system.
Let $m\in\mathbb{N}$ and $\X=(X,\mathcal{X},\mu,(T_{g})_{g\in\mathbb{R}^{m}})$ be an $\mathbb{R}^{m}$-system. Let $\G$ denote the collection of all $\a\in\mathbb{R}^{m}$ such that $I((t\a)_{t\in\mathbb{R}})\neq I(\mathbb{R}^{m})$.\footnote{$I(H)$ denote the $\sigma$-algebra of $\mathcal{X}$ consisting of all the $H$-invariant sets for every subgroup $H$ of $\mathbb{R}^{m}$.}
If $\X$ is an ergodic $\mathbb{R}^{m}$-system, then $\G$ is the collection of all $\a\in\mathbb{R}^{m}$ such that the $\mathbb{R}$-action $(T_{t\a})_{t\in\mathbb{R}}$ is not ergodic on $\X$. 
We have the following result:

\begin{thm}\label{main3}
	Let $m\in\mathbb{N}$ and $\nu$ be a Borel measure on $\mathbb{R}^{m}$. If $\X$ is an ergodic $\mathbb{R}^{m}$-system which is good for double Birkhoff averages such that  $\nu(\G+\beta)=0$ for every $\beta\in\mathbb{R}^{m}$, then $\nu$ is weakly equidistributed for $\X$. Conversely, if $\X$ is an ergodic $\mathbb{R}^{m}$-system such that  $\nu(\G+\beta)\neq 0$ for some $\beta\in\mathbb{R}^{m}$, then $\nu$ is not weakly equidistributed for $\X$.
\end{thm}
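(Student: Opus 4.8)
The plan is to recast weak equidistribution as the vanishing of a ``variance'', to evaluate that variance via von Neumann's mean ergodic theorem applied to the one-parameter subgroups $(T_{s(\a-\a')})_{s\in\mathbb{R}}$, and then to read off the condition on $\nu$. Normalize $f\in L^{\infty}(\mu)$ so that $\int_{X}f\,d\mu=0$, write $g_{t}^{f}(x):=\int_{\mathbb{R}^{m}}f(T_{t\a}x)\,d\nu(\a)$, and fix a countable $\mathcal{D}\subseteq L^{\infty}(\mu)$ dense in $L^{2}(\mu)\ominus\mathbb{C}$. The first step is a criterion: for $\X$ ergodic and good for double Birkhoff averages, $\nu$ is weakly equidistributed for $\X$ if and only if $\frac{1}{T}\int_{0}^{T}|g_{t}^{f}(x)|^{2}\,dt\to 0$ for $\mu$-a.e.\ $x$, for every $f\in\mathcal{D}$. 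The forward implication is immediate, since convergence of $g_{t}^{f}$ to $0$ along a density-$1$ set forces its bounded squares to have vanishing Ces\`aro mean. For the converse one uses $\|g_{t}^{f}-g_{t}^{h}\|_{L^{2}(\mu)}\le\|f-h\|_{L^{2}(\mu)}$ (valid for each fixed $t$) to pass from $\mathcal{D}$ to all of $L^{\infty}(\mu)$, and then extracts a single density-$1$ set $A$ from the pointwise Ces\`aro convergence; this extraction is where ``good for double Birkhoff'' enters, since it makes $\frac{1}{T}\int_{0}^{T}f(T_{t\a}x)\overline{f(T_{t\a'}x)}\,dt$ converge a.e.\ for each $\a,\a'$, hence --- by Fubini and dominated convergence in $(\a,\a')$ --- makes $\frac{1}{T}\int_{0}^{T}|g_{t}^{f}(x)|^{2}\,dt$ converge pointwise a.e.\ rather than merely oscillate, and, after mollifying $f$ along the action and truncating $\nu$ to a ball so that $t\mapsto g_{t}^{f}(x)$ becomes Lipschitz uniformly in $x$, a parametrized Koopman--von Neumann argument yields $A$ uniformly over $\mathcal{D}$.

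For the second step, fix $f\in\mathcal{D}$. Expanding the square, using Fubini, and passing to the limit, both $\lim_{T}\frac{1}{T}\int_{0}^{T}\|g_{t}^{f}\|_{L^{2}(\mu)}^{2}\,dt$ and $\int_{X}\bigl(\lim_{T}\frac{1}{T}\int_{0}^{T}|g_{t}^{f}(x)|^{2}\,dt\bigr)\,d\mu(x)$ equal $\int_{\mathbb{R}^{m}}\int_{\mathbb{R}^{m}}\lim_{T}\frac{1}{T}\int_{0}^{T}\langle f\circ T_{t(\a-\a')},f\rangle_{L^{2}(\mu)}\,dt\,d\nu(\a)\,d\nu(\a')$, and by the mean ergodic theorem for $(T_{s(\a-\a')})_{s}$ the inner limit is $\|\mathbb{E}[f\mid I((s(\a-\a'))_{s\in\mathbb{R}})]\|_{L^{2}(\mu)}^{2}$. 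Since $\X$ is ergodic and $\int f\,d\mu=0$, this is positive exactly when $(T_{s(\a-\a')})_{s}$ has a nonconstant invariant function correlating with $f$; letting $f$ range over the dense family $\mathcal{D}$ (and using $0\in\G$ for the diagonal $\a=\a'$), the union of the resulting sets of pairs is precisely $\{(\a,\a'):\a-\a'\in\G\}$. Hence $\lim_{T}\frac{1}{T}\int_{0}^{T}\|g_{t}^{f}\|_{L^{2}(\mu)}^{2}\,dt=0$ for all $f\in\mathcal{D}$ if and only if $(\nu\times\nu)(\{(\a,\a'):\a-\a'\in\G\})=0$, i.e.\ $\int_{\mathbb{R}^{m}}\nu(\G+\a')\,d\nu(\a')=0$, i.e.\ $\nu(\G+\beta)=0$ for $\nu$-a.e.\ $\beta$; moreover, since the pointwise limit $G^{f}(x):=\lim_{T}\frac{1}{T}\int_{0}^{T}|g_{t}^{f}(x)|^{2}\,dt$ exists a.e.\ and is nonnegative with $\int_{X}G^{f}\,d\mu$ equal to the quantity just computed, $\nu(\G+\beta)=0$ for \emph{all} $\beta$ forces $G^{f}=0$ a.e.

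The sufficiency direction now follows: $\nu(\G+\beta)=0$ for all $\beta$ gives $G^{f}=0$ a.e.\ for all $f\in\mathcal{D}$, so the criterion of the first step yields weak equidistribution. For the converse, weak equidistribution forces $g_{t}^{f}\to 0$ a.e.\ along a density-$1$ set for all $f\in L^{\infty}(\mu)$, hence $\frac{1}{T}\int_{0}^{T}\|g_{t}^{f}\|_{L^{2}(\mu)}^{2}\,dt\to 0$ (note this half needs no double-Birkhoff hypothesis), hence $\nu(\G+\beta)=0$ for $\nu$-a.e.\ $\beta$. To upgrade this to \emph{all} $\beta$ I would use that $\G$ is a \emph{countable} union of proper linear subspaces of $\mathbb{R}^{m}$: for a subspace $W$ and $h\in L^{2}(\mu)\ominus\mathbb{C}$ invariant under $(T_{w})_{w\in W}$, the stabilizer $\{w:h\circ T_{w}=h\}$ is a closed proper subspace, and distinct maximal subspaces that occur carry invariant functions whose spectral measures are mutually singular (they live on distinct annihilators and charge no smaller annihilator), so only countably many occur by separability of $L^{2}(\mu)$. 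Writing $\G=\bigcup_{n}V_{n}$ with each $V_{n}$ a proper subspace contained in $\G$: if $\nu(\G+\beta_{0})>0$ then $\nu(V_{n}+\beta_{0})>0$ for some $n$, and since $V_{n}+\beta=V_{n}+\beta_{0}$ for every $\beta\in V_{n}+\beta_{0}$ we get $\nu(\G+\beta)\ge\nu(V_{n}+\beta_{0})>0$ on the positive-$\nu$-measure set $V_{n}+\beta_{0}$, contradicting $\nu(\G+\beta)=0$ for $\nu$-a.e.\ $\beta$. This is the contrapositive of the second assertion.

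I expect the main obstacle to be the converse half of the criterion in the first step --- manufacturing one density-$1$ set $A$ that works simultaneously for all $f$ and $\mu$-a.e.\ $x$ --- because pointwise Ces\`aro convergence of the squares by itself only yields $L^{2}(\mu)$-convergence of $g_{t}^{f}$ along a density-$1$ set; the genuine use of the double-Birkhoff hypothesis, together with the mollification that regularizes $t\mapsto g_{t}^{f}(x)$, is precisely what should force this extraction through. The countability lemma for $\G$ used in the last step is the other point that requires care.
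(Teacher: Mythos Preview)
Your route is genuinely different from the paper's and, for the sufficiency direction, considerably more economical. The paper decomposes $f$ along the Host--Kra factor $Z_{\mathbb{R}^{m},\mathbb{R}^{m}}$: the orthogonal part is killed by showing that the relevant characteristic factors $Z_{\alpha,\alpha-\beta}$ coincide with $Z_{\mathbb{R}^{m},\mathbb{R}^{m}}$ for $(\nu\times\nu)$-a.e.\ $(\alpha,\beta)$, while the $Z_{\mathbb{R}^{m},\mathbb{R}^{m}}$-measurable part is handled by reducing to nilsystems and invoking Leibman's equidistribution criterion for pairs of nilflows. You bypass all of this structure theory with a single von~Neumann computation: once good-for-double-Birkhoff plus Fubini guarantee that $G^{f}(x)$ exists a.e., the identity $\int_{X}G^{f}\,d\mu=\iint\|\mathbb{E}[f\mid I((s(\alpha-\alpha'))_{s})]\|_{L^{2}}^{2}\,d\nu\,d\nu$ immediately forces $G^{f}=0$ a.e.\ under the hypothesis on $\nu$. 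This is correct and elegant; what the paper's approach buys is an explicit link to the Host--Kra hierarchy that may matter for higher-order generalisations, but for the theorem as stated your argument is the shorter one.

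For the necessity direction the paper projects onto a one-dimensional quotient, splits $\pi_{*}\nu$ into atomless and discrete parts, invokes the sufficiency direction (in dimension one, via Bourgain) to dispose of the atomless piece, and computes directly with the discrete piece. Your upgrade from ``$\nu(\G+\beta)=0$ for $\nu$-a.e.\ $\beta$'' to ``for all $\beta$'' via the countable-subspace structure of $\G$ is cleaner and avoids the recursive appeal to sufficiency; the pigeonhole step ($\nu(V_{n}+\beta_{0})>0$ forces $\nu(\G+\beta)>0$ on the positive-measure set $V_{n}+\beta_{0}$) is valid. Your spectral sketch of why $\G$ is a countable union of subspaces is thinner than the paper's Pugh--Shub based proof, however, and you should either flesh it out or simply cite that result.

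The one point where both your write-up and the paper are delicate is exactly the one you flag: passing from ``$G^{f}(x)=0$ for $\mu$-a.e.\ $x$'' to ``there exists a single density-$1$ set $A$ along which $g_{t}^{f}(x)\to 0$ for $\mu$-a.e.\ $x$''. Koopman--von~Neumann gives you, for each $x$, a density-$1$ set $A_{x}$, and the diagonal trick over countable $\mathcal{D}$ makes $A_{x}$ uniform in $f$, but uniformity in $x$ is not automatic. Your proposed fix via mollification (making $t\mapsto g_{t}^{f}(x)$ Lipschitz uniformly in $x$) is a reasonable direction, but it needs a genuine argument: Lipschitz regularity alone does not obviously synchronise the exceptional sets across $x$. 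The paper glosses over the same step, so you are not worse off, but be aware that this is where the real work lies if you want a fully rigorous proof.
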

We remark that the second part of Theorem \ref{main3} holds for every ergodic $\mathbb{R}^{m}$-system. 
We give an example to illustrate Theorem \ref{main3}.
\begin{example}
	Let $m\in\mathbb{N}$ and $(X=\mathbb{T}^{m},\mathcal{X},\mu)$ be an $m$-dimensional torus endowed with the Lebesgue measure $\mu$. For all $\a\in\mathbb{R}^{m}$, denote $T_{\a}\beta=\a+\beta\mod\mathbb{Z}^{m}$ for all $\beta\in\mathbb{T}^{m}$. Then $\X=(X,\mathcal{X},\mu,(T_{\a})_{\a\in\mathbb{R}^{m}})$ is an ergodic  $\mathbb{R}^{m}$-system and is good for double Birkhoff averages. In this case,  $\G$ consists of all the $(m-1)$-dimensional rational subspaces of $\mathbb{R}^{m}$. By Theorem \ref{main3},  a Borel measure $\nu$ on $\mathbb{R}^{m}$ is weakly equidistributed for $\X$ if and only if the $\nu$-measure of any translation of a rational subspace of $\mathbb{R}^{m}$ is equal to zero. This recovers a special case of Theorem 1.1 of \cite{KSS}.
\end{example}	

In the case $m=1$, the assumption of the goodness for double Birkhoff averages can be dropped by using Bourgain's result \cite{B} (see Section \ref{s21}):
\begin{prop}\label{main4}
	Let $\nu$ be a Borel measure on $\mathbb{R}$ and $\X$ be an ergodic $\mathbb{R}$-system. Then $\nu$ is weakly equidistributed for $\X$ if and only if $\nu$ is atomless (meaning that $\nu(\{\beta\})=0$ for all $\beta\in\mathbb{R}$).
\end{prop}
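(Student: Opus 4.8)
The plan is to obtain Proposition~\ref{main4} as the special case $m=1$ of Theorem~\ref{main3}. Two points must be addressed: (i) when $m=1$ the hypothesis that $\X$ be good for double Birkhoff averages is automatically satisfied, so that both implications of Theorem~\ref{main3} apply to every ergodic $\mathbb{R}$-system; and (ii) for $m=1$ one has $\G=\{0\}$, so that the arithmetic condition ``$\nu(\G+\beta)=0$ for all $\beta\in\mathbb{R}$'' is literally the statement that $\nu$ is atomless. Granting (i) and (ii), Theorem~\ref{main3} says precisely that $\nu$ is weakly equidistributed for $\X$ if and only if $\nu(\G+\beta)=0$ for every $\beta$, which by (ii) is the same as $\nu$ being atomless, and we are done.

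For (i), fix $\a_1,\a_2\in\mathbb{R}$ and $f_1,f_2\in L^{\infty}(\mu)$. If $\a_1=0$ (resp.\ $\a_2=0$), then $\frac1T\int_0^T f_1(T_{\a_1 t}x)f_2(T_{\a_2 t}x)\,dt$ equals $f_1(x)$ (resp.\ $f_2(x)$) times an ordinary Birkhoff average, and converges $\mu$-a.e.\ by the Birkhoff ergodic theorem for $\mathbb{R}$-systems. If $\a_1\neq 0$, the substitution $u=\a_1 t$ turns this average into $\frac1R\int_0^R f_1(S_u x)f_2(S_{cu}x)\,du$ with $R=|\a_1| T\to\infty$, $c=\a_2/\a_1$, and $S$ the flow $(T_t)_{t\in\mathbb{R}}$ (replaced by the reverse flow $(T_{-t})_{t\in\mathbb{R}}$ when $\a_1<0$); this converges $\mu$-a.e.\ by the continuous-time form of Bourgain's double recurrence theorem~\cite{B}, the reduction to which is recorded in Section~\ref{s21}. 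Hence every $\mathbb{R}$-system, and in particular $\X$, is good for double Birkhoff averages.

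For (ii), assume $(X,\mathcal{X},\mu)$ is nontrivial (if it is trivial then $\G=\emptyset$, every Borel measure is weakly equidistributed, and the statement is to be read under the usual convention that $X$ is nontrivial). If $\a\neq 0$, then $\{t\a:t\in\mathbb{R}\}=\mathbb{R}$, so any set invariant under all $T_{t\a}$ is invariant under the ergodic flow $(T_s)_{s\in\mathbb{R}}$, hence has measure $0$ or $1$; thus $(T_{t\a})_{t\in\mathbb{R}}$ is ergodic and $\a\notin\G$. On the other hand $T_0=\mathrm{id}$ is not ergodic on a nontrivial system, so $0\in\G$. Therefore $\G=\{0\}$, $\G+\beta=\{\beta\}$ for every $\beta$, and $\nu(\G+\beta)=0$ for all $\beta\in\mathbb{R}$ if and only if $\nu(\{\beta\})=0$ for all $\beta\in\mathbb{R}$, i.e.\ $\nu$ is atomless.

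The only step that is not pure bookkeeping is the appeal to Bourgain's theorem in (i). The mild subtlety there is that the ratio $c=\a_2/\a_1$ may be irrational, so one genuinely needs the double recurrence theorem in its full strength: when $c$ is rational a routine transference from a single measure-preserving transformation would already suffice. I expect the verification of the continuous-time double recurrence statement (carried out in Section~\ref{s21}) to be the main point, with everything else a direct specialization of Theorem~\ref{main3}.
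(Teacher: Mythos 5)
Your proposal is correct and follows essentially the same route as the paper: invoke Bourgain's double recurrence theorem (via the discrete-to-continuous reduction of Section \ref{s21}) to see that every ergodic $\mathbb{R}$-system is good for double Birkhoff averages, observe that $W(\X)=\{0\}$ for a nontrivial ergodic $\mathbb{R}$-system, and then specialize Theorem \ref{main3} to $m=1$. You merely supply more detail than the paper does at the two points it treats as immediate, including the correct caveat that the ratio $\a_2/\a_1$ may be irrational.
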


It is an interesting question to understand the algebraic structure of $\G$.
Let $W'(\X)$ denote the collection of all $\a\in\mathbb{R}^{m}$ such that $I((n\a)_{n\in\mathbb{Z}})\neq I(\mathbb{R}^{m})$. Then $W(\X)\subseteq W'(\X)$. By a result of Pugh and Shub \cite{PS} (see also Theorem \ref{PS}), $W'(\X)$ is contained in the union of at most countably many hyperplanes of $\mathbb{R}^{m}$. We show in Section \ref{s3} an analog of this result for $\G$:
\begin{thm}\label{geo}
	Let $m\in\mathbb{N}$ and $\X$ be an $\mathbb{R}^{m}$-system. 
	Then $W(\X)$ is the union of at most countably many proper subspaces of $\mathbb{R}^{m}$.
\end{thm}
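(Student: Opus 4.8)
The plan is to reduce the structure of $W(\X)$ to the structure of $W'(\X)$ — the set of $\a$ for which the \emph{single-time} action $(T_{n\a})_{n\in\mathbb{Z}}$ fails to be ergodic relative to $I(\mathbb{R}^{m})$ — which by the Pugh--Shub theorem quoted as Theorem \ref{PS} is already contained in countably many hyperplanes. The key point is that $W(\X)$ is a \emph{cone}: if $\a\in W(\X)$ then $s\a\in W(\X)$ for every $s\neq 0$, since $(T_{t(s\a)})_{t\in\mathbb{R}}$ and $(T_{t\a})_{t\in\mathbb{R}}$ are the same one-parameter group and hence have the same invariant $\sigma$-algebra. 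So it suffices to show that $W(\X)$ is contained in a countable union of hyperplanes; being a cone, each such hyperplane can then be intersected appropriately — more precisely, if $W(\X)\subseteq\bigcup_i \ell_i$ with $\ell_i = V_i+\beta_i$, then $W(\X) = \bigcup_i \bigl(W(\X)\cap\ell_i\bigr)$, and since $W(\X)$ is a cone through the origin, $W(\X)\cap\ell_i$ is nonempty only if it contains a ray, which forces (after writing $W(\X)\cap(V_i+\beta_i)$ and scaling) that the relevant points lie in the linear subspace $V_i$ together possibly with $\mathrm{span}$ of a point of $\ell_i$; in any case one extracts from each $\ell_i$ at most countably many proper \emph{subspaces} covering $W(\X)\cap\ell_i$. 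Thus the whole theorem follows once we establish $W(\X)\subseteq W'(\X) \cup (\text{countably many hyperplanes})$ or, more directly, that $W(\X)$ itself is contained in countably many hyperplanes.

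The main step is therefore the inclusion comparing the continuous-time and discrete-time invariant $\sigma$-algebras. First I would observe $W(\X)\subseteq W'(\X)$: if $(T_{t\a})_{t\in\mathbb{R}}$ is ergodic relative to $I(\mathbb{R}^{m})$ on every ergodic component, then a fortiori so is the subaction $(T_{n\a})_{n\in\mathbb{Z}}$ — wait, this is the wrong direction, so more care is needed. The correct relation is that $I((t\a)_{t\in\mathbb{R}}) \subseteq I((n\a)_{n\in\mathbb{Z}})$ always (fewer transformations, more invariant sets), so $I((n\a)_{n\in\mathbb{Z}}) = I(\mathbb{R}^{m})$ \emph{implies} $I((t\a)_{t\in\mathbb{R}}) = I(\mathbb{R}^{m})$; contrapositively $W(\X)\subseteq W'(\X)$, exactly as stated in the excerpt. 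Hence $W(\X)\subseteq W'(\X)\subseteq \bigcup_{i} \ell_i$ for countably many hyperplanes $\ell_i$ by Theorem \ref{PS}. Now I invoke the cone property: intersecting with each hyperplane and using that $W(\X)$ is scaling-invariant through $0$, each $W(\X)\cap\ell_i$ is a cone contained in an affine hyperplane, so it is contained in the linear hyperplane through $0$ parallel to $\ell_i$ intersected appropriately — concretely, a nonzero vector $v$ with $v\in\ell_i=V_i+\beta_i$ and $2v\in\ell_i$ forces $v\in V_i$, so $W(\X)\cap\ell_i\subseteq V_i\cup\{0\}$, and $V_i$ is a proper \emph{linear} subspace. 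Taking the union over $i$ (and throwing in $\{0\}$, itself a proper subspace when $m\geq 1$), we get $W(\X)=\bigcup_i V_i$ up to the origin, a countable union of proper subspaces.

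The one genuine subtlety — and the step I expect to be the main obstacle — is whether the Pugh--Shub theorem as available here is stated for a general (not necessarily ergodic) $\mathbb{R}^{m}$-system and in the relative form ``$I((n\a)_{n})\neq I(\mathbb{R}^{m})$'' rather than merely ``$(T_{\a})$ not ergodic on an ergodic system.'' If only the ergodic version is at hand, I would pass to the ergodic decomposition of $\X$ over $I(\mathbb{R}^{m})$: the condition $\a\in W'(\X)$ means $(T_{n\a})_n$ fails to be ergodic on a positive-measure set of ergodic components, and on each such component Pugh--Shub gives countably many hyperplanes; but one then needs a measurable-selection or separability argument to see that only countably many hyperplanes occur in total across all components. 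This is where the separability of the probability space (built into the definition of a $G$-system in the excerpt) does the work: the invariant $\sigma$-algebra is countably generated, so the ergodic decomposition is parametrized by a standard probability space, and the assignment of the exceptional hyperplanes is measurable, yielding only countably many. I would make this precise by choosing a countable dense family in $L^2$, expressing non-ergodicity of $(T_{n\a})_n$ via vanishing of suitable ergodic averages of these functions, and noting that for each fixed function the set of bad $\a$ is already a countable union of hyperplanes by the classical result. Once this bookkeeping is in place, the cone argument above finishes the proof.
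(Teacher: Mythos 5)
Your reduction $W(\X)\subseteq W'(\X)$ and the appeal to the non-ergodic, relative form of Pugh--Shub (the paper's Lemma \ref{PS2}) are sound, but two steps break down. The local one: $W(\X)\cap\ell_i$ is \emph{not} a cone when $\ell_i=V_i+\beta_i$ is an affine hyperplane --- for $v\in W(\X)\cap\ell_i$ you know $2v\in W(\X)$, not $2v\in\ell_i$, so the implication ``$v,2v\in\ell_i$ forces $v\in V_i$'' never gets to fire. This is repairable: the whole line $\mathbb{R}v$ lies in $W(\X)\subseteq\bigcup_i\ell_i$, so by pigeonhole a single $\ell_i$ contains two distinct multiples of $v$, whence $v\in V_i$. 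That salvages the containment $W(\X)\subseteq\bigcup_i V_i\cup\{0\}$ with the $V_i$ linear hyperplanes.

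The genuine gap is that this containment is strictly weaker than the theorem, which asserts that $W(\X)$ \emph{equals} a countable union of proper subspaces (equivalently, is covered by countably many proper subspaces each contained in $W(\X)$). For $m\ge 3$ a cone contained in countably many hyperplanes through the origin can still be, say, an uncountable fan of lines inside a single plane, which is not a countable union of subspaces; your final sentence asserts equality without justification. The paper closes this by a descent on dimension: for each covering hyperplane $V_i$ with $V_i\not\subseteq W(\X)$, one picks an ergodic direction inside $V_i$ to conclude $I(V_i)=I(\mathbb{R}^m)$, passes to the restricted $V_i$-system $\Y$, identifies $W(\X)\cap V_i=W(\Y)$, and reapplies Lemma \ref{PS2} to $\Y$ to cover this intersection by countably many proper subspaces of $V_i$ (using the cone/pigeonhole trick again to discard the affine pieces); after at most $m$ iterations every remaining piece either lies in $W(\X)$ or is trivial. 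This induction is absent from your argument, and the stronger conclusion is genuinely needed downstream --- the second half of the proof of Theorem \ref{main3} extracts a subspace $V\subseteq W(\X)$ with $\nu(V+\beta)>0$ --- so it cannot be waved away.
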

In other words, $W(\X)$ is contained in the union of at most countably many hyperplanes of $\mathbb{R}^{m}$ passing through the origin.

While all the previous mentioned results on the strong equidistribution rely heavily on tools from harmonic analysis, in this paper, we provide purely ergodic theoretic proofs for Theorems \ref{main} and \ref{main3} and Proposition \ref{main4}.
An advantage of considering the weak  equidistribution problem is that while the conditions in Theorems \ref{main}, \ref{main3} and Proposition \ref{main4} are almost necessary 
and sufficient, the conditions imposed in all the previously mentioned  results for strong equidistribution seem to be far from being necessary. Moreover, we make no smoothness assumption for the Borel measure $\nu$ in the main results of this paper, as we do not apply Fourier analysis in the proofs.

\subsection{Organization of the paper}
In Section \ref{s3}, we provide two variations of the result of Pugh and Shub \cite{PS} on the ergodic directions of $\mathbb{R}^{m}$-systems for later uses. In Section \ref{s4}, we introduce Host-Kra characteristic factors, which is the main tool of this paper. For the convenience of our purpose and future researches, we develop the existing results on this topic into a more general setting. The proves of the main results (Theorems \ref{main} and \ref{main3}) are in Section \ref{s5}. In Section \ref{s21}, we take a review for systems which are good for the double Birkhoff averages, and discuss applications of the main theorems of this paper to such systems (including the proof of Proposition \ref{main4}).

\section{Ergodic elements in Ergodic systems}\label{s3}
Let $m\in\mathbb{N}$, $\X$ be an $\mathbb{R}^{m}$-system and $H$ be a subgroup of $G$. 
We say that $(T_{h})_{h\in H}$ is \emph{ergodic} for $\X$ if all the $H$-invariant subsets of $\X$ are of measure either 0 or 1.

A key ingredient connecting $\mathbb{R}^{m}$-systems and $\mathbb{Z}^{m}$-systems is the following:
\begin{thm}[Pugh and Shub \cite{PS}, Theorem 1.1]\label{PS}
	Let $m\in\mathbb{N}$ and $\X$ be an ergodic $\mathbb{R}^{m}$-system. Then for all $\a\in\mathbb{R}^{m}$ except at most a countable family of hyperplanes of $\mathbb{R}^{m}$, the $\mathbb{Z}$-action $(T_{n\a})_{n\in\mathbb{Z}}$ is ergodic for $\X$.
\end{thm}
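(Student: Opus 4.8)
The plan is to approach Theorem \ref{PS} through the spectral theory of the Koopman representation, reducing the ergodicity of each one-parameter subaction to a positivity statement about a single scalar spectral measure, and then to settle the resulting question as a purely measure-theoretic assertion about how that measure distributes its mass over affine hyperplanes.

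First I would set up the Koopman representation $U_g f = f\circ T_g$ on $L^2(\mu)$. The measurability hypothesis on $g\mapsto T_g x$ together with separability of $L^2(\mu)$ makes $U$ a strongly continuous unitary representation of $\mathbb{R}^m$, so the Stone--Naimark--Ambrose--Godement spectral theorem furnishes a projection-valued measure $E$ on $\widehat{\mathbb{R}^m}\cong\mathbb{R}^m$ with $U_g=\int e^{2\pi i\langle\xi,g\rangle}\,dE(\xi)$. Since $L^2(\mu)$ is separable, $E$ restricted to the mean-zero subspace $L^2_0$ admits a reduced maximal spectral type: a finite Borel measure $\sigma$ on $\mathbb{R}^m$ with $E(B)|_{L^2_0}=0\iff\sigma(B)=0$, and ergodicity of the $\mathbb{R}^m$-action forces $\sigma(\{0\})=0$. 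A standard argument identifies ergodicity of the $\mathbb{Z}$-action $(T_{n\a})_{n\in\mathbb{Z}}$ with the absence of nonconstant $U_\a$-fixed functions, and by functional calculus the fixed space is the range of $E(S_\a)$, where $S_\a=\{\xi:\langle\xi,\a\rangle\in\mathbb{Z}\}$. Hence $(T_{n\a})_{n\in\mathbb{Z}}$ is \emph{non}-ergodic exactly when $\sigma(S_\a)>0$, that is, when $\sigma$ charges one of the parallel affine hyperplanes $\{\langle\xi,\a\rangle=k\}$, $k\in\mathbb{Z}$.

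It then remains to prove the following lemma: if $\sigma$ is a finite Borel measure on $\mathbb{R}^m$ with $\sigma(\{0\})=0$, then the set $E_\sigma$ of $\a$ charging some such hyperplane lies in countably many hyperplanes of $\mathbb{R}^m$. The key device is to decompose charged affine subspaces into \emph{primitive} ones, where I call an affine subspace $V$ primitive if $\sigma(V)>0$ yet $\sigma(V')=0$ for every proper affine subspace $V'\subsetneq V$. Two facts drive the argument. First, for each fixed dimension there are at most countably many primitive subspaces: two distinct primitive subspaces of equal dimension meet in a proper, hence $\sigma$-null, affine subspace, so a disjointification argument bounds the number with mass exceeding $1/n$ by $n\sigma(\mathbb{R}^m)$. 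Second, every charged affine subspace contains a primitive one, obtained by passing to a positive-mass affine subspace of minimal dimension. Combining these, each exceptional $\a$ arises from a charged hyperplane containing a primitive $V=V_0+\beta$ with $V\subseteq\{\langle\xi,\a\rangle=k\}$; this forces $\a\in V_0^\perp$ together with $\langle\beta,\a\rangle=k$, so the $\a$ attached to a given $V$ lie either in the single proper subspace $V_0^\perp$ (when $\dim V\ge 1$) or in countably many affine hyperplanes $\{\langle\beta,\cdot\rangle=k\}$ (when $V$ is an atom, necessarily located at $\beta\neq 0$). As there are only countably many primitive subspaces, $E_\sigma$ is covered by countably many hyperplanes, which is the assertion.

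I expect the main obstacle to be this lemma rather than the spectral reduction, and within it the phenomenon that $\sigma$ can charge uncountably many affine hyperplanes simultaneously --- for instance, arc length on a line in $\mathbb{R}^3$ is charged by the entire pencil of planes through that line. Consequently a naive "one line of exceptional directions per charged hyperplane" bookkeeping breaks down, since uncountably many charged hyperplanes are in play. The primitive-subspace decomposition is precisely what tames this: such a pencil shares a common lower-dimensional primitive $V$, and the associated normals $\a$ are confined to the single proper subspace $V_0^\perp$, so uncountably many charged hyperplanes nonetheless contribute directions lying in just one hyperplane. The only remaining points requiring care are the strong continuity of the Koopman representation and the existence of the reduced maximal spectral type, both standard for separable systems.
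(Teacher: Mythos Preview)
The paper does not prove Theorem~\ref{PS} itself but quotes it from Pugh--Shub~\cite{PS}; only the relative variant (Lemma~\ref{PS2}) receives a sketch, and that sketch follows~\cite{PS} step by step. Your proposal is correct and is exactly the Pugh--Shub spectral argument visible there: pass to the Koopman representation, identify non-ergodicity of $(T_{n\alpha})_{n\in\mathbb{Z}}$ with the spectral measure on $\widehat{\mathbb{R}^m}$ charging $\ker(\alpha)=\{\xi:\langle\xi,\alpha\rangle\in\mathbb{Z}\}$, and then prove the measure-theoretic lemma that the $\alpha$ for which this happens fill at most countably many hyperplanes. The only cosmetic differences are that you use a single reduced maximal spectral type $\sigma$ in place of the cyclic decomposition $L^{2}(\mu)=H\oplus_{i}H_{i}$ with measures $\beta_{i}$ used in~\cite{PS}, and that your primitive-subspace bookkeeping is your own (clean and correct) way to execute what in~\cite{PS} is the content of their Section~5.
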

In this section, we provide two generalization of Pugh and Shub's Theorem.
The first is a relative version of Theorem \ref{PS}.  
\begin{lem}\label{PS2}
	Let $m\in\mathbb{N}$ and $\X$ be a (not necessarily ergodic) $\mathbb{R}^{m}$-system. Then for all $\a\in\mathbb{R}^{m}$ except at most a countable family of hyperplanes of $\mathbb{R}^{m}$, we have that $I((n\a)_{n\in\mathbb{Z}})=I(\mathbb{R}^{m})$.
\end{lem}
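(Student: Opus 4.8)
The plan is to recast the statement in terms of the Koopman representation and the spectral theorem, which turns it into a purely measure--theoretic assertion. Let $U_g f=f\circ T_g$ be the Koopman representation of $\mathbb{R}^m$ on $L^2(\mu)$; since the action is measurable, $(U_g)$ is strongly continuous, so by the SNAG theorem there is a projection--valued measure $E$ on $\widehat{\mathbb{R}^m}\cong\mathbb{R}^m$ with $U_g=\int e^{2\pi i\langle g,\xi\rangle}\,dE(\xi)$. Choosing $g_n\in L^2(\mu)$ with $\|g_n\|=1$ whose linear span is dense and setting $\rho=\sum_n 2^{-n}\langle E(\cdot)g_n,g_n\rangle$, one obtains a finite Borel measure on $\mathbb{R}^m$ with $E(A)=0\iff\rho(A)=0$ for every Borel $A$. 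Put $\eta:=\rho|_{\mathbb{R}^m\setminus\{0\}}$, a finite Borel measure with $\eta(\{0\})=0$.

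For a subgroup $H\le\mathbb{R}^m$, the closed subspace of $H$--invariant functions in $L^2(\mu)$ is exactly the range of $E$ evaluated on $\{\xi:\langle h,\xi\rangle\in\mathbb{Z}\text{ for all }h\in H\}$; this set is $\{0\}$ when $H=\mathbb{R}^m$ and is $S_\alpha:=\{\xi:\langle\alpha,\xi\rangle\in\mathbb{Z}\}$ when $H=\mathbb{Z}\alpha$. Since two sub--$\sigma$--algebras of $\mathcal{X}$ agree (mod $\mu$) if and only if the associated conditional expectations --- the orthogonal projections onto these subspaces --- agree, and since $\{0\}\subseteq S_\alpha$ always, we obtain
\[
I((n\alpha)_{n\in\mathbb{Z}})=I(\mathbb{R}^m)\iff E(S_\alpha\setminus\{0\})=0\iff\eta(S_\alpha)=0.
\]
Thus the lemma is equivalent to the following claim, which I would prove by induction on $m$: for every finite Borel measure $\eta$ on $\mathbb{R}^m$ with $\eta(\{0\})=0$, the set $B(\eta):=\{\alpha\in\mathbb{R}^m:\eta(S_\alpha)>0\}$ is contained in a countable union of hyperplanes.

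For the claim, write $S_\alpha=\bigsqcup_{k\in\mathbb{Z}}\{\xi:\langle\alpha,\xi\rangle=k\}$, so $B(\eta)=\bigcup_{k\in\mathbb{Z}}B_k(\eta)$ with $B_k(\eta):=\{\alpha:\eta(\{\xi:\langle\alpha,\xi\rangle=k\})>0\}$; a countable union of sets each contained in countably many hyperplanes is again such a set, so it suffices to control each $B_k$. As $B_k(\eta)=kB_1(\eta)$ for $k\ne0$ and scalings preserve hyperplanes, only $B_0(\eta)=\{\alpha:\eta(\alpha^\perp)>0\}$ and $B_1(\eta)$ need attention. For $B_1$ one homogenizes: push $\eta$ forward by $\xi\mapsto(\xi,1)\in\mathbb{R}^{m+1}$ to a finite measure $\hat\eta$ with $\hat\eta(\{0\})=0$, check $\alpha\in B_1(\eta)\iff(\alpha,-1)\in B_0(\hat\eta)$, and observe that intersecting a hyperplane of $\mathbb{R}^{m+1}$ with $\{x_{m+1}=-1\}$ yields a hyperplane of $\mathbb{R}^m$ or the empty set; so $B_1(\eta)$ is controlled once $B_0$ is controlled in dimension $m+1$. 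Everything therefore reduces to showing that $B_0(\eta)$ is contained in countably many hyperplanes in every dimension, for $\eta$ finite with no atom at $0$. I would do this by induction on the dimension: first use finiteness of $\eta$ to extract, by a maximality argument, a countable family $\{V_j\}$ of linear hyperplanes such that $\eta(V\setminus\bigcup_j V_j)=0$ for every linear hyperplane $V$; then, for $\alpha\ne0$ with $\eta(\alpha^\perp)>0$, one has $\eta(\alpha^\perp\cap V_j)>0$ for some $j$, and either $\alpha^\perp=V_j$, forcing $\alpha\in V_j^\perp$ (countably many lines), or $\alpha$ is normal to a hyperplane of $V_j$ charged by $\eta|_{V_j}$, in which case the induction hypothesis applied to $\eta|_{V_j}$ (which still has no atom at the origin) confines such $\alpha$ to countably many hyperplanes.

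I expect the combinatorial induction of the last paragraph to be the main obstacle: the bookkeeping showing that the normal directions contributed by a charged hyperplane of $V_j$, lifted back to the ambient space, stay inside finitely many subspaces of the right dimension, together with the degenerate case where $\alpha^\perp\cap V_j$ collapses to $\{0\}$. That degenerate case is exactly where the hypothesis $\eta(\{0\})=0$ is needed and must be needed --- an atom of the maximal spectral type at the origin would make every direction non-ergodic for the $\mathbb{Z}$--action --- so the argument genuinely relies on having passed to the orthogonal complement of the $\mathbb{R}^m$--invariant functions, which is what restricting $\rho$ to $\mathbb{R}^m\setminus\{0\}$ accomplishes. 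The remaining inputs --- strong continuity of the Koopman representation of a measurable $\mathbb{R}^m$--action (to apply SNAG) and the identification of invariant $\sigma$--algebras with spectral projections --- are standard, and I would simply cite them.
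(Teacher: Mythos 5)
Your proposal is correct and follows essentially the same route as the paper, which itself only sketches the argument by adapting Pugh and Shub's spectral-theoretic proof: you replace their countable family of cyclic spectral measures $\beta_i$ by a single maximal spectral type $\rho$ restricted away from the origin, and your reduction to the covering claim for $B(\eta)$ is exactly the content of their Lemmas 1--2, while your dimension induction with homogenization re-derives the covering argument of their Section 5 that the paper simply cites. The details you flag as needing bookkeeping (lifting normals of charged hyperplanes of $V_j$ back to ambient hyperplanes $U_k+V_j^{\perp}$, and discarding covering hyperplanes not through the origin by the uncountability of a punctured line) do work out, so there is no gap.
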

The proof of this lemma is almost identical to that of Theorem \ref{PS}, and so we only provide a sketch.
\begin{proof}[Sketch of the proof]
	Let $G$ be an abelian, Hausdorff, locally compact and separable group, and $\X=(X,\mathcal{X},\mu,(T_{g})_{g\in G})$ be an $G$-system. 
	Using Zorn's Lemma, and the fact that $\X$ is separable, we may decompose $L^{2}(\mu)$ as a countable direct sum of orthogonal closed subspaces
	$$L^{2}(\mu)=H\oplus_{i} H_{i},$$
	where $H$ consists of all the $G$-invariant functions, and for each $i$, $H_{i}$ is the smallest closed subspace of  $L^{2}(\mu)$ containing the $G$-orbit of some $f_{i}\in L^{2}(\mu)$. To each $i$, there corresponds a unique normalized Borel measure $\beta_{i}$ on the dual group $\widehat{G}=Hom(G,\mathbb{T}^{1})$ such that $(T_{g})_{g\in G}$ restricted to $H_{i}$ is unitarily equivalent to the ``direct integral" representation $m_{i}\colon G\to Un(L^{2}(\widehat{G},\beta_{i}))$, $$g\to\langle\cdot,g\rangle f(\cdot), f\in L^{2}(\widehat{G},\beta_{i}).$$
	For $g\in G$, denote $$\ker(g)=\{\chi\in\widehat{G}\colon\langle\chi, g\rangle=1\}.$$
	Following the proof  in \cite{PS}, we can deduce the following:
	
	\textbf{Claim 1:}
	The identity element of $\widehat{G}$ has zero $\beta_{i}$ measure for all $i$.
	
	\textbf{Claim 2:}
	If $I((g^{n})_{n\in\mathbb{Z}})\neq I(G)$ for some $g\in G$, then there exists $i$ such that $\beta_{i}(\ker (g))>0$.
	
	For Claim 1, if the identity element of $\widehat{G}$ has positive $\beta_{i}$ measure for some $i$, by the argument of the proof of Lemma 1 of \cite{PS}, one can construct a non-trivial $G$-invariant function lying in $H_{i}$, a contradiction. For Claim 2,
	if $I((g^{n})_{n\in\mathbb{Z}})\neq I(G)$ for some $g\in G$, then there exists a $g$-invariant function which does not belong to $H$, and the rest of the proof is identical to Lemma 2 of \cite{PS}.
	
	We now return to the case when $G=\mathbb{R}^{m}$. By using Claims 1 and 2 to replace Lemmas 1 and 2 of \cite{PS}, and following the same argument as in Section 5 of \cite{PS}, we finish the proof.
\end{proof}

We now prove Theorem \ref{geo}, which is a variation of Theorem \ref{PS} for the ergodicity of $\mathbb{R}$-actions. This result is interesting on its own. 

\begin{proof}[Proof of Theorem \ref{geo}]
	We first claim that for every subspace $V$ of $\mathbb{R}^{m}$, either $V\subseteq W(\X)$ or there exists a  family of at most countably many proper subspaces $(V_{j})_{j\in J}$ of $V$ such that $W(\X)\cap V\subseteq \bigcup_{j\in J}V_{j}$.
	
	Let $\X=(X,\mathcal{X},\mu,(T_{\a})_{\a\in\mathbb{R}^{m}})$ and
	suppose that $V\not\subseteq W(\X)$. Then there exists $\a\in V\backslash\G$ such that $I((t\a)_{t\in\mathbb{R}})=I(\mathbb{R}^{m})\subseteq I(V)\subseteq I((t\a)_{t\in\mathbb{R}})$. Therefore $I(\mathbb{R}^{m})=I(V)$. 	Now consider the $V$-system $\Y=(X,\mathcal{X},\mu,(T_{\a})_{\a\in V})$. Since $I(\mathbb{R}^{m})=I(V)$, we have that $W(\X)\cap V=W(\Y)$.
	
	Suppose that for every family of at most countably many proper subspaces $(V_{j})_{j\in J}$ of $V$, we have that  $W(\Y)=W(\X)\cap V\not\subseteq \bigcup_{j\in J}V_{j}$. Since $I((t\alpha)_{t\in\mathbb{R}})\subseteq I((n\alpha)_{n\in\mathbb{Z}})$, applying Lemma \ref{PS2} to $\Y$, there exist at most countably many proper subspaces $(V_{j})_{j\in J}$ of $V$, and at most countably many hyperplanes $(V_{j})_{j\in J'}$ of $V$ not passing through the origin such that
	$W(\Y)\subseteq \bigcup_{j\in J\cup J'}V_{j}$. By assumption, $W(\Y)\not\nsubseteq \bigcup_{j\in J}V_{j}$. So there exists $\alpha\in W(\Y)\backslash\bigcup_{j\in J}V_{j}$.
	By the definition of $W(\Y)$, it is easy to see that $\alpha\in W(\Y)$ implies that $t\alpha\in W(\Y)$ for all $t\in\mathbb{R}$. Since $\alpha\notin \bigcup_{j\in J}V_{j}$ implies that $t\alpha\notin \bigcup_{j\in J}V_{j}$ for all $t\neq 0$, we must have that $\{t\a\colon t\in\mathbb{R}\}\subseteq \bigcup_{j\in J'}V_{j}$. However, since $V_{j}$ does not pass through the origin for all $j\in J'$, $\{t\a\colon t\in\mathbb{R}\}\cap\bigcup_{j\in J'}V_{j}$ is a countable set, which leads to a contradiction. This proves the claim.
	
	Now we return to the proof of the theorem. By Lemma \ref{PS2}, $W(\X)\neq \mathbb{R}^{m}$. By the claim, there exists a  family of at most countably many subspaces $(V_{j})_{j\in J_{1}\cup L_{1}}$ of $\mathbb{R}^{m}$ of co-dimension at least 1 such that $W(\X)\subseteq \bigcup_{j\in J_{1}\cup L_{1}}V_{j}$, where $V_{j}\subseteq W(\X)$ if $j\in L_{1}$ and $V_{j}\not\subseteq W(\X)$ if $j\in J_{1}$. Applying the claim to each subspace in $J_{1}$, there exists a family of at most countably many subspaces $(V_{j})_{j\in J_{2}\cup L_{2}}$ of $\mathbb{R}^{m}$ such that $W(\X)\subseteq \bigcup_{j\in J_{2}\cup L_{2}}V_{j}$, where all $V_{j},j\in J$ are of co-dimension at least 2, $V_{j}\subseteq W(\X)$ if $j\in L_{2}$, and $V_{j}\not\subseteq W(\X)$ if $j\in J_{2}$. Using the claim repeatedly, there exists a family of at most countably many subspaces $(V_{j})_{j\in J_{m}\cup L_{m}}$ of $\mathbb{R}^{m}$ such that $W(\X)\subseteq \bigcup_{j\in J_{m}\cup L_{m}}V_{j}$, where all $V_{j},j\in J$ are of co-dimension at least $m$, $V_{j}\subseteq W(\X)$ if $j\in L_{m}$, and $V_{j}\not\subseteq W(\X)$ if $j\in J_{m}$. Since $J_{m}$ is an empty set, we have that $W(\X)=\bigcup_{j\in J_{m}}V_{j}$, which finishes the proof.
\end{proof}

\section{Characteristic factors and structure theorem}\label{s4}
\subsection{Host-Kra characteristic factors}

Let $G$ be an abelian locally compact Hausdorff topological group and $H_{1},\dots,H_{d}$ be subgroups of $G$.
Let $\X=(X,\mathcal{X},\mu,(T_{g})_{g\in G})$ be a $G$-system. 
For convenience we denote $X^{[d]}=X^{2^{d}}$, $\mathcal{X}^{[d]}=\mathcal{X}^{2^{d}}$ and  $T_{g}^{[d]}=T_{g}^{2^{d}}$. 
For any subgroup $H$ of $G$, let $I(H)$ denote the $\sigma$-algebra of $\mathcal{X}$ consisting of all the $H$-invariant sets. For $1\leq j\leq d-1$, let $I_{\Delta}(H^{[j]}_{j+1})$ denote the sub $\sigma$-algebra of $\mathcal{X}^{[j]}$ consisting of all the sets which are invariant under $T^{[j]}_{g}$ for all $g\in H_{j+1}$. 
We inductively define the \emph{Host-Kra measures} $\mu_{H_{1},\dots,H_{j}}$ on $X^{[j]}$ by setting
$\mu_{H_{1}}=\mu\times_{I(H_{1})}\mu,$ 
meaning that 
$$\int_{X^{2}}f\otimes g\,d\mu_{H_{1}}=\int_{X}\mathbb{E}(f\vert I(H_{1}))\cdot \mathbb{E}(g\vert I(H_{1}))\,d\mu$$ for all $f,g\in L^{\infty}(\mu),$
and for all $1\leq j\leq d-1$, define
$\mu_{H_{1},\dots,H_{j+1}}=\mu_{H_{1},\dots,H_{j}}\times_{I_{\Delta}(H^{[j]}_{j+1})}\mu_{H_{1},\dots,H_{j}},$
meaning that 
$$\int_{X^{[j+1]}}F\otimes G\,d\mu_{H_{1},\dots,H_{j+1}}=\int_{X^{[j]}}\mathbb{E}(F\vert I_{\Delta}(H^{[j]}_{j+1}))\cdot \mathbb{E}(G\vert I_{\Delta}(H^{[j]}_{j+1}))\,d\mu_{H_{1},\dots,H_{j}}$$ for all $F,G\in L^{\infty}(\mu^{[j]}).$
We define the \emph{Host-Kra seminorm} by 
$$\Vert f\Vert_{\X,H_{1},\dots,H_{d}}:=\Bigl(\int_{X^{[d]}}f^{\otimes 2^{d}}\,d\mu_{H_{1},\dots,H_{d}}\Bigr)^{\frac{1}{2^{d}}}$$
for all $f\in L^{\infty}(\mu)$.
Let $Z_{H_{1},\dots,H_{d}}(\X)$ (or $Z_{H_{1},\dots,H_{d}}$ when there is no confusion) be the sub $\sigma$-algebra of $\mathcal{X}$ such that for all $f\in L^{\infty}(\mu)$, $$\mathbb{E}(f\vert Z_{H_{1},\dots,H_{d}}(\X))=0 \text{ if and only if } \Vert f\Vert_{\X,H_{1},\dots,H_{d}}=0.$$
Similar to the proof of Lemma 4 of \cite{H} (or Lemma 4.3 of \cite{HK}), one can show that  $Z_{H_{1},\dots,H_{d}}$ is well defined and we call it a \emph{Host-Kra characteristic factor}.\footnote{Sometimes we will slight abuse the notation and say that "$Z_{H_{1},\dots,H_{d}}$ is a factor $X$", meaning that the system  $(X,Z_{H_{1},\dots,H_{d}},\mu,G)$ is a factor of $(X,\mathcal{X},\mu,G)$.}

The following lemma is useful in many circumstances:
\begin{lem}\label{replacement0}
	Let $G$ be an abelian locally compact Hausdorff topological group and $\X$ be a $G$-system. Let $H_{1},\dots,H_{d},H'_{j}$ be subgroups of $G$ for some $1\leq j\leq d$. 
	
	(i) For every permutation $\sigma\colon\{1,\dots,d\}\to\{1,\dots,d\}$, we have that $Z_{H_{1},\dots,H_{d}}(\X)=Z_{H_{\sigma(1)},\dots,H_{\sigma(d)}}(\X)$;
	
	(ii) If $I(H_{j})=I(H'_{j})$, then $Z_{H_{1},H_{2},\dots,H_{j},\dots,H_{d}}(\X)=Z_{H_{1},H_{2},\dots,H'_{j},\dots,H_{d}}(\X)$.
\end{lem}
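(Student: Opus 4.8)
The plan is to reduce both statements to properties of the Host--Kra measures $\mu_{H_1,\dots,H_d}$ themselves. Since $Z_{H_1,\dots,H_d}(\X)$ is uniquely determined by the seminorm $\Vert\cdot\Vert_{\X,H_1,\dots,H_d}$ (by the well-definedness of $Z_{H_1,\dots,H_d}$ recalled above, following \cite[Lemma 4]{H}) and $\Vert f\Vert_{\X,H_1,\dots,H_d}^{2^d}=\int_{X^{[d]}}f^{\otimes 2^d}\,d\mu_{H_1,\dots,H_d}$, it is enough to understand how $\mu_{H_1,\dots,H_d}$ transforms under the operations in (i) and (ii). Writing $X^{[d]}=X^{\{0,1\}^d}$ and letting $\sigma_\ast\colon X^{[d]}\to X^{[d]}$ denote the permutation of coordinates induced by a permutation $\sigma$ of $\{1,\dots,d\}$, part (i) reduces — because $f^{\otimes 2^d}$ is $\sigma_\ast$-invariant for every $\sigma$ — to the equivariance $(\sigma_\ast)_\ast\mu_{H_1,\dots,H_d}=\mu_{H_{\sigma(1)},\dots,H_{\sigma(d)}}$. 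For part (ii), observe that in the tower $\mu_{H_1,\dots,H_{j+1}}=\mu_{H_1,\dots,H_j}\times_{I_\Delta(H_{j+1}^{[j]})}\mu_{H_1,\dots,H_j}$ the group $H_j$ enters only at stage $j$ and never afterwards; hence $\mu_{H_1,\dots,H_d}$, and therefore $Z_{H_1,\dots,H_d}(\X)$, depends on $H_j$ only through the sub $\sigma$-algebra $I_\Delta(H_j^{[j-1]})$ of $(X^{[j-1]},\mu_{H_1,\dots,H_{j-1}})$.

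Both reduced statements I would settle using the iterated F\o lner-average description of the Host--Kra measures. Fixing F\o lner sequences $(\Phi^{H_i}_N)_{N\in\mathbb N}$ in each $H_i$ and letting $\mathcal C$ be the complex-conjugation operator (so $\mathcal C^{|\epsilon|}f=f$ or $\overline f$ according to the parity of $|\epsilon|$), one has that $\int_{X^{[d]}}\bigotimes_\epsilon f_\epsilon\,d\mu_{H_1,\dots,H_d}$ equals the iterated limit, as $N_1,\dots,N_d\to\infty$, of the normalized averages over $g_i\in\Phi^{H_i}_{N_i}$ of $\int_X\prod_{\epsilon\in\{0,1\}^d}\mathcal C^{|\epsilon|}f_\epsilon\big(T_{g_1^{\epsilon_1}\cdots g_d^{\epsilon_d}}x\big)\,dx$, and moreover this limit is independent of the order of the $N_i$. (Supplying this formula in the present generality is exactly what the machinery of Section \ref{s4} is for; cf.\ \cite{HK}.) Given it, part (i) is immediate: simultaneously relabeling $(H_i,g_i,\Phi^{H_i})\mapsto(H_{\sigma(i)},g_{\sigma(i)},\Phi^{H_{\sigma(i)}})$ and substituting $\epsilon\mapsto\sigma\epsilon$ in the product leaves the right-hand side unchanged, which is the asserted equivariance. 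For part (ii), use (i) to move $H_j$ to the last position, fix all averaging parameters except the one running over $H_j$, and note that the inner expression is then the average over $g\in\Phi^{H_j}_N$ of a matrix coefficient $g\mapsto\langle u,T_g v\rangle$ of the Koopman representation of $\X$ on $L^2(\mu)$, with $u,v\in L^\infty(\mu)$ depending only on $f$ and on the other parameters; by the mean ergodic theorem for the amenable group $H_j$ this converges to $\langle\mathbb E(u\mid I(H_j)),\mathbb E(v\mid I(H_j))\rangle$, which depends on $H_j$ only through $I(H_j)$. Passing to the remaining limits gives that $\mu_{H_1,\dots,H_d}$ depends on $H_j$ only through $I(H_j)$, which is (ii).

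The main obstacle is not the soft part of the argument but making two inputs precise at this level of generality (a locally compact abelian $G$ and arbitrary, possibly non-ergodic, subgroups): first, the iterated F\o lner-average formula above, and in particular the fact that the limits may be rearranged freely — it is exactly this that licenses isolating the $H_j$-average as a matrix coefficient in (ii); and second — as an alternative route to (i) for anyone who prefers to avoid the limit formula — the classical symmetry $\tau_\ast\mu_{K_1,K_2}=\mu_{K_2,K_1}$ (with $\tau$ the bit-swap of $\{0,1\}^2$), together with the reduction of an arbitrary permutation to this two-coordinate case via adjacent transpositions and the tower relation applied to the intermediate $G$-systems $(X^{[j-1]},\mu_{H_1,\dots,H_{j-1}},(T_g^{[j-1]})_{g\in G})$ — which, incidentally, is why it is worth stating the lemma for a general abelian $G$. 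Once these are available, both parts follow quickly.
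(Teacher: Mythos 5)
Your architecture matches the paper's (reduce to the seminorms, reduce permutations to adjacent transpositions via the tower relation applied to the intermediate systems $(X^{[i-1]},\mu_{H_{1},\dots,H_{i-1}},(T^{[i-1]}_{g})_{g\in G})$, and describe everything by F\o lner averages), but the step you yourself call ``the main obstacle'' --- that the iterated F\o lner limits may be rearranged freely, equivalently that $(\sigma_{*})_{*}\mu_{H_{1},\dots,H_{d}}=\mu_{H_{\sigma(1)},\dots,H_{\sigma(d)}}$ at the level of measures --- is exactly the nontrivial content, and you leave it unproved. The paper explicitly declines to prove this (a footnote remarks only that the proof of Proposition 3 of \cite{H} ``seems'' adaptable to give equality of the seminorms) and instead proves the strictly weaker statement that suffices for equality of the factors $Z$: the \emph{vanishing} of $\Vert f\Vert_{\X,H_{1},H_{2}}$ is equivalent to the vanishing of $\Vert f\Vert_{\X,H_{2},H_{1}}$. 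The trick is that one never needs to interchange two iterated limits in general: a van der Corput argument (as in Lemma 2 of \cite{H}) bounds the \emph{joint} average over $F_{1,N}\times F_{2,N}$ by $\Vert f\Vert_{\X,H_{1},H_{2}}$, so when that seminorm vanishes the joint limit exists and is $0$; then Lemmas 1.1 and 1.2 of \cite{BL} identify the iterated limit in the opposite order (whose inner limits exist by the mean ergodic theorem) with the joint limit, giving $\Vert f\Vert_{\X,H_{2},H_{1}}=0$. Your ``alternative route'' to (i) via the ``classical'' symmetry $\tau_{*}\mu_{K_{1},K_{2}}=\mu_{K_{2},K_{1}}$ assumes the same unproved fact in different clothing, so as written part (i) has a genuine gap.

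Part (ii) has a secondary problem of the same origin. Your first formulation --- that $\mu_{H_{1},\dots,H_{d}}$ depends on $H_{j}$ only through $I_{\Delta}(H_{j}^{[j-1]})$ --- is correct but does not close the argument: the hypothesis is $I(H_{j})=I(H'_{j})$ on $(X,\mu)$, and it is not obvious that this forces $I_{\Delta}(H_{j}^{[j-1]})=I_{\Delta}((H'_{j})^{[j-1]})$ on $(X^{[j-1]},\mu_{H_{1},\dots,H_{j-1}})$. Your second formulation (move $H_{j}$ to the \emph{last} slot and isolate a matrix coefficient on $X$) again needs the $H_{j}$-average to be the \emph{innermost} one, i.e.\ the free rearrangement. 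The paper's fix is to permute $H_{j}$ to the \emph{first} slot, where the base of the tower is $\mu_{H_{1}}=\mu\times_{I(H_{1})}\mu$, manifestly determined by $I(H_{1})$ alone; the rest of the tower is then built from $\mu_{H_{1}}$ with no further reference to $H_{1}$, so (ii) follows by induction once (i) is in hand.
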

\begin{proof}
	(i) The proof is similar to \cite{H} and so we only provide a sketch. It suffices to show that for all subgroups $H_{1},\dots, H_{d}$ of $G$ and $1\leq i\leq d-1$, we have that $Z_{H_{1},\dots,H_{i},H_{i+1},\dots,H_{d}}(\X)=Z_{H_{1},\dots,H_{i+1},H_{i},\dots,H_{d}}(\X)$, or $$\Vert f\Vert_{\X,H_{1},\dots,H_{i},H_{i+1},\dots,H_{d}}=0\Leftrightarrow\Vert f\Vert_{\X,H_{1},\dots,H_{i+1},H_{i},\dots,H_{d}}=0$$ for all $f\in L^{\infty}(\mu)$.\footnote{It seems that the proof of Proposition 3 of \cite{H} can be adapted to proving that $f\Vert_{\X,H_{1},\dots,H_{i},H_{i+1},\dots,H_{d}}=\Vert f\Vert_{\X,H_{1},\dots,H_{i+1},H_{i},\dots,H_{d}}$ for all $f\in L^{\infty}(X)$. But we do not need this property in this paper.} By the definition of the Host-Kra measure, it suffices to show that 
	$\Vert f\Vert_{\X,H_{1},\dots,H_{i},H_{i+1}}=0\Leftrightarrow\Vert f\Vert_{\X,H_{1},\dots,H_{i+1},H_{i}}=0$. Replacing the system $\X=(X,\mathcal{X},\mu,(T_{g})_{g\in G})$ with $(X^{[i-1]},\mathcal{X}^{[i-1]},\mu_{H_{1},\dots,H_{i-1}},(T^{[i-1]}_{g})_{g\in G})$, it suffices to show that for all $G$-system $\X$ and subgroups $H_{1}, H_{2}$ of $G$, we have that $$\Vert f\Vert_{\X,H_{1},H_{2}}=0\Leftrightarrow\Vert f\Vert_{\X,H_{2},H_{1}}=0.$$ 
	
	Suppose first that $\Vert f\Vert_{\X,H_{1},H_{2}}=0$. We may assume that $\Vert f\Vert_{L^{\infty}(\mu)}\leq 1$.
	Let $(F_{1,n})_{n\in\mathbb{N}}$ and $(F_{2,n})_{n\in\mathbb{N}}$ be any F\o lner sequences of $H_{1}$ and $H_{2}$, respectively. Similar to Lemma 2 of \cite{H}, it is not hard to show that 
	$$\Bigl\vert\lim_{N\to\infty}\frac{1}{\vert F_{1,N}\vert\cdot \vert F_{2,N}\vert}\sum_{g_{1}\in F_{1,N},g_{2}\in F_{2,N}}\int_{X}f\cdot T_{g_{1}}f\cdot T_{g_{2}}f\cdot T_{g_{1}g_{2}}f\,d\mu\Bigr\vert\leq \Vert f\Vert_{\X,H_{1},H_{2}}=0.$$
	So the limit $\lim_{N\to\infty}\frac{1}{\vert F_{1,N}\vert\cdot \vert F_{2,N}\vert}\sum_{g_{1}\in F_{1,N},g_{2}\in F_{2,N}}\int_{X}f\cdot T_{g_{1}}f\cdot T_{g_{2}}f\cdot T_{g_{1}g_{2}}f\,d\mu$ exists and equals to 0. On the other hand, similar to (11) of \cite{H} (and invoke Theorem 8.13 of \cite{ET}, the Birkhoff ergodic theorem for $G$-systems), 
	$$\Vert f\Vert_{\X,H_{2},H_{1}}^{4}=\lim_{N\to\infty}\frac{1}{\vert F_{2,N}\vert}\sum_{g_{2}\in F_{2,N}}\lim_{N\to\infty}\frac{1}{\vert F_{1,N}\vert}\sum_{g_{1}\in F_{1,N}}\int_{X}f\cdot T_{g_{1}}f\cdot T_{g_{2}}f\cdot T_{g_{1}g_{2}}f\,d\mu,$$
	where the limit 
	$$\lim_{N\to\infty}\frac{1}{\vert F_{1,N}\vert}\sum_{g_{1}\in F_{1,N}}\int_{X}f\cdot T_{g_{1}}f\cdot T_{g_{2}}f\cdot T_{g_{1}g_{2}}f\,d\mu$$
	exists for all $g_{2}\in H_{2}$. By Lemma 1.1 and 1.2 of \cite{BL}, 
	\begin{equation}\nonumber
		\begin{split}
			&\quad \lim_{N\to\infty}\frac{1}{\vert F_{2,N}\vert}\sum_{g_{2}\in F_{2,N}}\lim_{N\to\infty}\frac{1}{\vert F_{1,N}\vert}\sum_{g_{1}\in F_{1,N}}\int_{X}f\cdot T_{g_{1}}f\cdot T_{g_{2}}f\cdot T_{g_{1}g_{2}}f\,d\mu
			\\&=\lim_{N\to\infty}\frac{1}{\vert F_{1,N}\vert\cdot \vert F_{2,N}\vert}\sum_{g_{1}\in F_{1,N},g_{2}\in F_{2,N}}\int_{X}f\cdot T_{g_{1}}f\cdot T_{g_{2}}f\cdot T_{g_{1}g_{2}}f\,d\mu=0,
		\end{split}
	\end{equation}
	and so $\Vert f\Vert_{\X,H_{2},H_{1}}=0$. Similarly,  $\Vert f\Vert_{\X,H_{2},H_{1}}=0$ implies that $\Vert f\Vert_{\X,H_{1},H_{2}}=0$.
	

	We now prove (ii). By (i), we may assume without loss of generality that $j=1$. Note that $$\mu_{H_{1}}=\mu\times_{I(H_{1})}\mu=\mu\times_{I(H'_{1})}\mu=\mu_{H'_{1}}.$$ By induction, $\mu_{H_{1},H_{2},\dots,H_{d}}=\mu_{H'_{1},H_{2},\dots,H_{d}}$ and so $Z_{H_{1},H_{2},\dots,H_{d}}(\X)=Z_{H'_{1},H_{2},\dots,H_{d}}(\X)$, which finishes the proof.	 
\end{proof}
The following is an immediate corollary of Lemma \ref{replacement0}:
\begin{lem}\label{replacement}
	Let $G$ be an abelian locally compact Hausdorff topological group and $\X$ be a $G$-system. Let $H_{1},\dots,H_{d}$, $H'_{1},\dots,H'_{d}$ be subgroups of $G$. If $I(H_{i})=I(H'_{i})$ for all $1\leq i\leq d$, then $Z_{H_{1},H_{2},\dots,H_{d}}(\X)=Z_{H'_{1},H'_{2},\dots,H'_{d}}(\X)$. 
\end{lem}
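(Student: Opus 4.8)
The plan is to obtain Lemma~\ref{replacement} from Lemma~\ref{replacement0}(ii) by replacing the subgroups one at a time, i.e.\ by a telescoping argument. First I would set up the chain of intermediate factors: for $0\leq k\leq d$ define
$$Z^{(k)}:=Z_{H'_{1},\dots,H'_{k},H_{k+1},\dots,H_{d}}(\X),$$
so that $Z^{(0)}=Z_{H_{1},\dots,H_{d}}(\X)$ and $Z^{(d)}=Z_{H'_{1},\dots,H'_{d}}(\X)$. It then suffices to prove that $Z^{(k-1)}=Z^{(k)}$ for every $1\leq k\leq d$, and to concatenate these $d$ equalities.

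To carry out the $k$-th step, fix $k$ and compare the two tuples of subgroups defining $Z^{(k-1)}$ and $Z^{(k)}$: they coincide in every coordinate except the $k$-th, which is $H_{k}$ in the former and $H'_{k}$ in the latter. Since $I(H_{k})=I(H'_{k})$ by hypothesis, Lemma~\ref{replacement0}(ii), applied with distinguished index $j=k$, gives precisely $Z^{(k-1)}=Z^{(k)}$. Chaining $Z^{(0)}=Z^{(1)}=\dots=Z^{(d)}$ completes the proof.

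I do not expect any real obstacle here — this is exactly why the paper records it as an immediate corollary. The only point worth a moment's attention is that Lemma~\ref{replacement0}(ii) is already stated for a general distinguished index $1\leq j\leq d$, so one can apply it directly in the slot where the modified subgroup sits, without first permuting coordinates; if one preferred to use Lemma~\ref{replacement0}(ii) only with $j=1$, an application of Lemma~\ref{replacement0}(i) with the transposition $(1\,k)$ before and after would accomplish the same thing. I would present the telescoping version above, as it is the cleanest and makes the role of the hypothesis $I(H_{i})=I(H'_{i})$ transparent.
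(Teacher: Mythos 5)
Your telescoping argument is correct and is exactly the intended deduction: the paper states Lemma~\ref{replacement} as an immediate corollary of Lemma~\ref{replacement0} without writing out a proof, and replacing the $H_{i}$ by the $H'_{i}$ one coordinate at a time via part~(ii) is precisely what is meant. Nothing is missing.
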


\subsection{Structure theorems for $\mathbb{R}^{m}$-systems}
In this section, we establish structure theorems for $\mathbb{R}^{m}$-systems. These questions has been studied in various papers, see for example \cite{A,BLM,P,Z}. As none of the existing results can be applied directly to our problem, we need to develop the past results into a more general setting. In this paper, we only use some special cases of the theorems developed in this section. But we still write all the results in full generality for the purpose of future researches.

\begin{conv}
	Let $m\in\mathbb{N}$,  $\X$ be an $\mathbb{R}^{m}$-system and $H_{1},\dots,H_{d}$ be subgroups of $\mathbb{R}^{m}$. In the notations $\mu_{H_{1},H_{2},\dots,H_{d}}$, $Z_{H_{1},H_{2},\dots,H_{d}}$ and $\Vert\cdot\Vert_{\X,H_{1},H_{2},\dots,H_{d}}$, if $H_{i}=(t\a_{i})_{t\in\mathbb{R}}$ for some $\a_{i}\in\mathbb{R}^{m}$, we abbreviate $H_{i}$ by $\a_{i}$. If  $H_{i}=(n\a_{i})_{n\in\mathbb{Z}}$ for some $\a_{i}\in\mathbb{R}^{m}$, we abbreviate $H_{i}$ by $\ha_{i}$. For example, the notion $Z_{\a_{1},\a_{2},\ha_{3}}$ represents $Z_{(t\a_{1})_{t\in\mathbb{R}},(t\a_{2})_{t\in\mathbb{R}},(n\a_{3})_{n\in\mathbb{Z}}}$, and $\mu_{\a_{1},\a_{2},\ha_{3}}$ represents $\mu_{(t\a_{1})_{t\in\mathbb{R}},(t\a_{2})_{t\in\mathbb{R}},(n\a_{3})_{n\in\mathbb{Z}}}$.
\end{conv}

The Host-Kra characteristic factor is an important tool in the study of problems related to multiple averages. For example, certain Host-Kra characteristic factors control the $L^{2}$ limit of  multiple averages for $\mathbb{Z}^{m}$-systems:

\begin{thm}\label{H}
	Let $m\in\mathbb{N}$, $\X=(X,\mathcal{X},\mu,(T_{g})_{g\in\mathbb{R}^{m}})$ be an $\mathbb{R}^{m}$-system and let $\alpha_{1},\dots,\alpha_{d}\in\mathbb{R}^{m}$. Denote $\widehat{Z}_{i}:=Z_{\ha_{i},\widehat{\alpha_{1}-\alpha_{i}},\dots,\widehat{\alpha_{d}-\alpha_{i}}}(\X)$ for all $1\leq i\leq d$. Then for all $f_{1},\dots,f_{d}\in L^{\infty}(\mu)$, both the  $L^{2}(\mu)$ limits of
	$$\lim_{N\to\infty}\frac{1}{N}\sum_{n=0}^{N-1}f_{1}(T_{n\alpha_{1}}x)\cdot\ldots\cdot f_{d}(T_{n\alpha_{d}x})$$
	 and 
	 $$\lim_{N\to\infty}\frac{1}{N}\sum_{n=0}^{N-1}\mathbb{E}(f_{1}\vert \widehat{Z}_{1})(T_{n\alpha_{1}}x)\cdot\ldots\cdot \mathbb{E}(f_{d}\vert \widehat{Z}_{d})(T_{n\alpha_{d}}x)$$
	 exist and coincide (as $L^{2}(\mu)$ functions). Moreover, if both limit exist for $\mu$-a.e. $x\in X$, then they coincide for  $\mu$-a.e. $x\in X$. 
\end{thm}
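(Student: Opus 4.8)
The plan is to read Theorem \ref{H} as a reformulation, in the language of Section \ref{s4}, of the main theorem of \cite{H} on characteristic factors for averages of commuting transformations. First I would reduce to a $\mathbb{Z}^{d}$-system: since $n$ runs over the integers, only the commuting measure preserving transformations $S_{i}:=T_{\a_{i}}$ ($1\le i\le d$) matter, and the average in question is $\frac{1}{N}\sum_{n=0}^{N-1}\prod_{i=1}^{d}f_{i}(S_{i}^{n}x)$. The $\mathbb{Z}$-action generated by $S_{i}$ is $(T_{n\a_{i}})_{n\in\mathbb{Z}}$, i.e. the action indexed by $\ha_{i}$, and the $\mathbb{Z}$-action generated by $S_{j}S_{i}^{-1}=T_{\a_{j}-\a_{i}}$ is the one indexed by $\widehat{\a_{j}-\a_{i}}$; in particular $I(S_{i})=I(\ha_{i})$ and $I(S_{j}S_{i}^{-1})=I(\widehat{\a_{j}-\a_{i}})$. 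Comparing the inductive definitions, the Host--Kra seminorms and factors of \cite{H} attached to the tuple of single transformations $(S_{i},(S_{j}S_{i}^{-1})_{j\ne i})$ are precisely the seminorms $\Vert\cdot\Vert_{\X,\ha_{i},(\widehat{\a_{j}-\a_{i}})_{j\ne i}}$ and the factor $\widehat{Z}_{i}$ defined in Section \ref{s4} (the direction $\widehat{\a_{i}-\a_{i}}$ is of course omitted: a short computation with the Host--Kra measures shows that its inclusion would give back $\mathcal{X}$, making the statement vacuous, so $\widehat{Z}_{i}$ uses exactly $d$ directions, and Lemma \ref{replacement0}(i) lets us ignore their order).

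Next I would apply \cite{H}: for commuting $S_{1},\dots,S_{d}$ the average $\frac{1}{N}\sum_{n<N}\prod_{i}f_{i}(S_{i}^{n}x)$ converges in $L^{2}(\mu)$, and each $\widehat{Z}_{i}$ is a characteristic factor for the $i$-th coordinate, so that if $\mathbb{E}(f_{i}\vert\widehat{Z}_{i})=0$ for some $i$ then the $L^{2}$ limit is $0$ (quantitatively, the limit is bounded in $L^{2}$ by $\Vert f_{i}\Vert_{\X,\ha_{i},(\widehat{\a_{j}-\a_{i}})_{j\ne i}}\prod_{j\ne i}\Vert f_{j}\Vert_{L^{\infty}(\mu)}$, the usual van der Corput/PET estimate). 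Writing each $f_{i}=\mathbb{E}(f_{i}\vert\widehat{Z}_{i})+g_{i}$ with $g_{i}:=f_{i}-\mathbb{E}(f_{i}\vert\widehat{Z}_{i})$, expanding $\prod_{i}(\mathbb{E}(f_{i}\vert\widehat{Z}_{i})+g_{i})$ into $2^{d}$ products, and observing that every product other than $\prod_{i}\mathbb{E}(f_{i}\vert\widehat{Z}_{i})$ contains at least one factor $g_{i}$ (which satisfies $\mathbb{E}(g_{i}\vert\widehat{Z}_{i})=0$), multilinearity together with the characteristic factor property shows that the two averages in the statement have one and the same $L^{2}(\mu)$ limit, which in particular exists. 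For the pointwise clause, if moreover both averages converge $\mu$-a.e. (as happens, for instance, when $\X$ is good for double Birkhoff averages and $d\le 2$, or under Conjecture \ref{main2}), then since $L^{2}$ convergence forces $\mu$-a.e. convergence along a subsequence, each a.e. limit must agree $\mu$-a.e. with the common $L^{2}$ limit, hence the two a.e. limits coincide $\mu$-a.e.

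The only point requiring genuine care -- everything else being bookkeeping -- is the identification claimed in the first paragraph: one must verify that the iterated relative-product construction of \cite{H} for the single transformations $S_{i}$, $S_{j}S_{i}^{-1}$ of the (not necessarily ergodic) system $\X$ coincides with the construction of Section \ref{s4} for the cyclic subgroups $\ha_{i}$, $\widehat{\a_{j}-\a_{i}}$, including the degenerate cases where some $\a_{j}-\a_{i}=0$ or where $S_{j}S_{i}^{-1}$ acts non-ergodically; should the cited results be stated only for ergodic systems, one passes to the ergodic decomposition of the generated $\mathbb{Z}^{d}$-action and uses that the Host--Kra measures and factors depend measurably on the ergodic component.
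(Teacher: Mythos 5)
Your proposal is correct and follows essentially the same route as the paper: the $L^{2}$ existence and coincidence is exactly Host's Proposition 1 of \cite{H} applied to the commuting transformations $T_{\a_{1}},\dots,T_{\a_{d}}$ (the paper simply cites it, while you spell out the identification of the factors and the telescoping step), and the pointwise clause is handled by the same observation that a bounded sequence converging both in $L^{2}(\mu)$ and $\mu$-a.e.\ must have the same limit. Your parenthetical about omitting the degenerate direction $\widehat{\a_{i}-\a_{i}}$ is a reasonable reading of the paper's notation and does not affect correctness.
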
 
\begin{proof}
	The existence and coincidence of the $L^{2}(\mu)$ limits is a result of Host (\cite{H}, Proposition 1). The existence and coincidence of the pointwise limit follows from the fact that if a sequence of bounded functions converge both as $L^{2}(\mu)$ functions and almost everywhere, then both limits are the same.
\end{proof}

The following lemma illustrates the connection between  Host-Kra measures, seminorms and characteristic factors for $\mathbb{R}^{m}$-systems and that for $\mathbb{Z}^{m}$-systems.
\begin{lem}\label{picks}
	Let $\X$ be an $\mathbb{R}^{m}$-system and $\a_{1},\dots,\a_{d}\in\mathbb{R}^{m}$. Then for Lebesgue almost every $s\in\mathbb{R}$, $\mu_{\widehat{s\a_{1}},\dots,\widehat{s\a_{d}}}=\mu_{\a_{1},\dots,\a_{d}}$, $\Vert \cdot\Vert_{\X,\widehat{s\a_{1}},\dots,\widehat{s\a_{d}}}=\Vert\cdot\Vert_{\X,\a_{1},\dots,\a_{d}}$ and 
	$Z_{\widehat{s\a_{1}},\dots,\widehat{s\a_{d}}}=Z_{\a_{1},\dots,\a_{d}}$.
\end{lem}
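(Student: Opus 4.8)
The plan is to reduce everything to a single statement about Host--Kra seminorms, namely that for Lebesgue-a.e.\ $s\in\mathbb{R}$ we have $\Vert f\Vert_{\X,\widehat{s\a_1},\dots,\widehat{s\a_d}}=\Vert f\Vert_{\X,\a_1,\dots,\a_d}$ for every $f\in L^\infty(\mu)$ in a fixed countable dense subset of the unit ball of $L^\infty(\mu)$ (which suffices by continuity of the seminorms in $L^1(\mu)$). Indeed, once the seminorms agree for a.e.\ $s$, equality of the characteristic factors $Z_{\widehat{s\a_1},\dots,\widehat{s\a_d}}=Z_{\a_1,\dots,\a_d}$ follows from the defining property $\mathbb{E}(f\mid Z_{\cdots})=0\iff\Vert f\Vert_{\X,\cdots}=0$; and equality of the Host--Kra measures should follow from inspecting the inductive construction, since each measure $\mu_{H_1,\dots,H_j}$ is built by relatively independent joinings over the $\sigma$-algebras $I_\Delta(H^{[i]}_{i+1})$, and these invariant $\sigma$-algebras depend only on $I(H_{i+1})$ (Lemma~\ref{replacement0}(ii)).

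The core point is therefore: for a single subgroup direction, $I((ns\a)_{n\in\mathbb{Z}})=I((n\a)_{n\in\mathbb{Z}})$ for a.e.\ $s$, or more precisely that the relevant invariant $\sigma$-algebras coincide. First I would handle $d=1$: by Lemma~\ref{PS2} (the relative Pugh--Shub lemma), applied to the $\mathbb{R}$-system $(T_{t\a})_{t\in\mathbb{R}}$ acting on $\X$, for all $s$ outside a countable union of hyperplanes of $\mathbb{R}$ — i.e.\ outside a countable set — we have $I((ns\a)_{n\in\mathbb{Z}})=I((ts\a)_{t\in\mathbb{R}})=I((t\a)_{t\in\mathbb{R}})$, the last equality because $(ts\a)_{t\in\mathbb{R}}=(t\a)_{t\in\mathbb{R}}$ as subgroups when $s\neq 0$. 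Then Lemma~\ref{replacement} gives $Z_{\widehat{s\a}}=Z_{\a}$ and the corresponding equality of measures and seminorms.

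For general $d$ the obstacle is that the construction of $\mu_{H_1,\dots,H_d}$ at stage $j$ requires the invariant $\sigma$-algebra $I_\Delta((H_{j+1})^{[j]}_{j+1})$ of the \emph{product} system $(X^{[j]},\mu_{H_1,\dots,H_j},(T_g^{[j]})_{g\in H_{j+1}})$, so I need the replacement $\widehat{s\a_{j+1}}\leftrightarrow\a_{j+1}$ to be valid at the level of this product system, not just on $\X$. The clean way around this is to apply the $d=1$ argument, but with the ambient system taken to be each of the finitely many product systems $(X^{[j]},\mathcal{X}^{[j]},\mu_{H_1,\dots,H_j},(T^{[j]}_{g})_{g\in\mathbb{R}^m})$ for $j=0,1,\dots,d-1$ — but there is a circularity, since $\mu_{H_1,\dots,H_j}$ itself depends on the earlier choices. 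To break it, proceed by induction on $d$: assuming $\mu_{\widehat{s\a_1},\dots,\widehat{s\a_{d-1}}}=\mu_{\a_1,\dots,\a_{d-1}}$ for a.e.\ $s$, the product system at stage $d-1$ is (for a.e.\ $s$) literally the same regardless of hats, so applying Lemma~\ref{PS2} to it in the direction $\a_d$ yields, for a.e.\ $s$, $I_\Delta((ns\a_d)^{[d-1]})=I_\Delta((n\a_d)^{[d-1]})$, whence by the defining formula $\mu_{\widehat{s\a_1},\dots,\widehat{s\a_d}}=\mu_{\a_1,\dots,\a_d}$. One must take the intersection over $j=0,\dots,d-1$ of the relevant full-measure sets of $s$, which is still of full measure; and by Lemma~\ref{replacement0}(i) the order of the subgroups is immaterial, so at each stage it is legitimate to treat the newly added subgroup as the ``last'' one. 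Finally, equality of seminorms is the $f=\cdots=f$ diagonal case of equality of measures, and equality of characteristic factors follows as above.
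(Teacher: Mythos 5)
Your argument is correct and follows essentially the same route as the paper: Lemma \ref{PS2} applied in each direction $\a_{i}$ (yielding $I((ns\a_{i})_{n\in\mathbb{Z}})=I((t\a_{i})_{t\in\mathbb{R}})$ for all $s$ outside a countable set), combined with the replacement lemma. The one substantive point where you go beyond the paper is a good one: the paper disposes of the equality of measures with ``by definition,'' whereas you correctly observe that the construction of $\mu_{H_{1},\dots,H_{d}}$ at stage $j$ uses the invariant $\sigma$-algebra $I_{\Delta}(H^{[j]}_{j+1})$ of the \emph{product} system $(X^{[j]},\mu_{H_{1},\dots,H_{j}})$, so the identity of invariant $\sigma$-algebras on $X$ alone does not formally suffice; your fix --- induction on $d$, applying Lemma \ref{PS2} to the (by the inductive hypothesis $s$-independent) product system at each stage and intersecting the finitely many resulting full-measure sets of $s$ --- is exactly the right way to make that step rigorous. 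Two cosmetic remarks: the appeal to Lemma \ref{replacement0}(i) is unnecessary, since the inductive construction always adjoins the new subgroup in the last slot; and at the inductive step the $\sigma$-algebra to be identified with $I_{\Delta}((ns\a_{d})^{[d-1]}_{n\in\mathbb{Z}})$ is $I_{\Delta}((t\a_{d})^{[d-1]}_{t\in\mathbb{R}})$ (the $\mathbb{R}$-span, as in your $d=1$ chain of equalities), not the unscaled $\mathbb{Z}$-span.
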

\begin{proof}
	Let $H_{i}$ denote the $\mathbb{R}$-span of $\a_{i}$ and $\widehat{H}_{i}$ denote the $\mathbb{Z}$-span of $\ha_{i}$. Applying Lemma \ref{PS2} to each $H_{i}$, we have that for Lebesgue almost every $s\in\mathbb{R}$, we have that $I(H_{i})=I(s\widehat{H}_{i})$ for all $1\leq i\leq d$. By Corollary \ref{replacement},  $Z_{s\widehat{H}_{1},\dots,s\widehat{H}_{d}}=Z_{H_{1},\dots,H_{d}}$. By definition, 
	$\mu_{s\widehat{H}_{1},\dots,s\widehat{H}_{d}}=\mu_{H_{1},\dots,H_{d}}$
	and 
	$\Vert \cdot\Vert_{\X,s\widehat{H}_{1},\dots,s\widehat{H}_{d}}=\Vert\cdot\Vert_{\X,H_{1},\dots,H_{d}}$. This finishes the proof.
\end{proof}

We can now prove
the following analogue of Theorem \ref{H} for $\mathbb{R}^{m}$-systems:
\begin{prop}\label{Hf}
	Let $m\in\mathbb{N}$, $\X=(X,\mathcal{X},\mu,(T_{g})_{g\in\mathbb{R}^{m}})$ be an $\mathbb{R}^{m}$-system and let $\alpha_{1},\dots,\alpha_{d}\in\mathbb{R}^{m}$. Denote $Z_{i}:=Z_{\alpha_{i},\alpha_{1}-\alpha_{i},\dots,\alpha_{d}-\alpha_{i}}(\X)$ for all $1\leq i\leq d$. Then for all $f_{1},\dots,f_{d}\in L^{\infty}(\mu)$, both the limits 
	$$\lim_{T\to\infty}\frac{1}{T}\int_{0}^{T}f_{1}(T_{t\alpha_{1}}x)\cdot\ldots\cdot f_{d}(T_{t\alpha_{d}}x)\,dt$$
	and 
	$$\lim_{T\to\infty}\frac{1}{T}\int_{0}^{T}\mathbb{E}(f_{1}\vert Z_{1})(T_{t\alpha_{1}}x)\cdot\ldots\cdot\mathbb{E}(f_{d}\vert Z_{d})(T_{t\alpha_{d}}x)\,dt$$
exist and coincide (as $L^{2}(\mu)$ functions). Moreover, if both limit exists for $\mu$-a.e. $x\in X$, then they coincide for $\mu$-a.e. $x\in X$.
\end{prop}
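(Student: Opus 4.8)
The plan is to deduce Proposition~\ref{Hf} from the discrete-time statement Theorem~\ref{H} by discretizing the integral, the one twist being that one must first rescale the directions $\alpha_{1},\dots,\alpha_{d}$ by a well-chosen scalar so that the $\mathbb{Z}$-action Host--Kra factors delivered by Theorem~\ref{H} coincide with the $\mathbb{R}$-action factors $Z_{i}=Z_{\alpha_{i},\alpha_{1}-\alpha_{i},\dots,\alpha_{d}-\alpha_{i}}(\X)$ that appear in the statement; this is precisely where Lemma~\ref{picks} enters.

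\emph{Choice of scale.} Applying Lemma~\ref{picks} to each of the $d$ tuples $(\alpha_{i},\alpha_{1}-\alpha_{i},\dots,\alpha_{d}-\alpha_{i})$ and intersecting the corresponding conull sets of scalars, I would obtain some $s>0$ such that, for every $1\le i\le d$,
\[
Z_{\widehat{s\alpha_{i}},\widehat{s\alpha_{1}-s\alpha_{i}},\dots,\widehat{s\alpha_{d}-s\alpha_{i}}}(\X)=Z_{\alpha_{i},\alpha_{1}-\alpha_{i},\dots,\alpha_{d}-\alpha_{i}}(\X)=Z_{i}.
\]
Writing $\beta_{i}:=s\alpha_{i}$ (so $\beta_{j}-\beta_{i}=s(\alpha_{j}-\alpha_{i})$), this says exactly that the factor $\widehat{Z}_{i}=Z_{\widehat{\beta_{i}},\widehat{\beta_{1}-\beta_{i}},\dots,\widehat{\beta_{d}-\beta_{i}}}(\X)$ that Theorem~\ref{H} attaches to the vectors $\beta_{1},\dots,\beta_{d}$ equals $Z_{i}$.

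\emph{Discretization and application of Theorem~\ref{H}.} For $N\in\mathbb{N}$, splitting $[0,Ns)$ into the intervals $[ks,(k+1)s)$ and changing variables in each, one obtains for any bounded $\phi_{1},\dots,\phi_{d}$ the identity
\[
\frac{1}{Ns}\int_{0}^{Ns}\prod_{i=1}^{d}\phi_{i}(T_{t\alpha_{i}}x)\,dt=\int_{0}^{1}\Bigl(\frac{1}{N}\sum_{k=0}^{N-1}\prod_{i=1}^{d}\bigl(\phi_{i}\circ T_{\rho\beta_{i}}\bigr)(T_{k\beta_{i}}x)\Bigr)d\rho .
\]
For each fixed $\rho$, Theorem~\ref{H} applied to the vectors $\beta_{1},\dots,\beta_{d}$ and to $g_{i,\rho}:=\phi_{i}\circ T_{\rho\beta_{i}}$ shows that the inner average converges in $L^{2}(\mu)$ as $N\to\infty$ and that its limit is unchanged when each $g_{i,\rho}$ is replaced by $\mathbb{E}(g_{i,\rho}\mid\widehat{Z}_{i})=\mathbb{E}(g_{i,\rho}\mid Z_{i})$. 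Since each $Z_{i}$ is invariant under the whole $\mathbb{R}^{m}$-action (the Host--Kra seminorms, and hence the factors $Z_{H_{1},\dots,H_{d}}$, are $T_{g}$-invariant because $G$ is abelian), I may rewrite $\mathbb{E}(g_{i,\rho}\mid Z_{i})(T_{k\beta_{i}}x)=\mathbb{E}(\phi_{i}\mid Z_{i})(T_{(k+\rho)\beta_{i}}x)$. Integrating over $\rho\in[0,1]$, using dominated convergence to move the $L^{2}$-limit past $\int_{0}^{1}(\cdot)\,d\rho$ (the inner averages are bounded in $L^{\infty}(\mu)$ uniformly in $N$ and $\rho$), and then reading the displayed identity backwards, I conclude that $\frac{1}{Ns}\int_{0}^{Ns}\prod_{i}\phi_{i}(T_{t\alpha_{i}}x)\,dt$ converges in $L^{2}(\mu)$ as $N\to\infty$, with the same limit after replacing each $\phi_{i}$ by $\mathbb{E}(\phi_{i}\mid Z_{i})$.

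\emph{Finishing, and the main obstacle.} Taking $\phi_{i}=f_{i}$ gives the existence and coincidence of the two $L^{2}(\mu)$ limits along the windows $[0,Ns]$; since the integrand is bounded by $\prod_{i}\lVert f_{i}\rVert_{L^{\infty}(\mu)}$, replacing $Ns$ by an arbitrary $T$ with $N=\lfloor T/s\rfloor$ changes the average by $O(1/T)$ uniformly, so the same conclusion holds for $\frac{1}{T}\int_{0}^{T}$ as $T\to\infty$, which is the $L^{2}(\mu)$ assertion of the Proposition; the pointwise part then follows exactly as in Theorem~\ref{H}, since two pointwise limits that both exist must each equal the common $L^{2}(\mu)$ limit. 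I expect the only genuine subtlety to be the choice of scale: discretizing with step $1$ directly would produce the factors $Z_{\widehat{\alpha_{i}},\dots}$ attached to the $\mathbb{Z}$-actions $(T_{n\alpha_{i}})_{n\in\mathbb{Z}}$, which need not equal the $\mathbb{R}$-action factors $Z_{i}$, and rescaling the directions and invoking Lemma~\ref{picks} is exactly what reconciles them; the remaining ingredients (the discretization identity, the dominated-convergence interchange, the passage from integer to real windows, and the $T_{g}$-invariance of $Z_{i}$) are routine.
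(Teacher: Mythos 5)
Your proposal is correct and follows essentially the same route as the paper: rescale the directions via Lemma~\ref{picks} so that the $\mathbb{Z}$-action factors from Theorem~\ref{H} agree with the $Z_{i}$, discretize the time average into unit blocks, apply Theorem~\ref{H} to the shifted functions, and use the $G$-invariance of the $Z_{i}$ to reassemble the continuous average. Your treatment is, if anything, slightly more explicit about the dominated-convergence interchange and the passage from windows $[0,Ns]$ to arbitrary $[0,T]$, which the paper leaves implicit.
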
 
\begin{proof}
By Lemma \ref{picks}, there exists $s\in\mathbb{R}$ such that $Z_{i}=Z_{\alpha_{i},\alpha_{1}-\alpha_{i},\dots,\alpha_{d}-\alpha_{i}}(\X)=Z_{\wh{s\alpha_{i}},\wh{s(\alpha_{1}-\alpha_{i})},\dots,\wh{s(\alpha_{d}-\alpha_{i})}}(\X)$	for all $1\leq i\leq d$. For convenience we may assume without loss of generality that $s=1$. By Theorem \ref{H}, for all $f_{1},\dots,f_{d}\in L^{\infty}(\mu)$, 
\begin{equation}\nonumber
\begin{split}
\lim_{N\to\infty}\frac{1}{N}\sum_{n=0}^{N-1}T_{n\alpha_{1}}f_{1}\cdot\ldots\cdot T_{n\alpha_{d}}f_{d}=\lim_{N\to\infty}\frac{1}{N}\sum_{n=0}^{N-1}T_{n\alpha_{1}}\mathbb{E}(f_{1}\vert Z_{1})\cdot\ldots\cdot T_{n\alpha_{d}}\mathbb{E}(f_{d}\vert Z_{d}),
\end{split}
\end{equation}
where the limits are taken in $L^{2}(\mu)$. Using the fact that every $Z_{i}$ is $G$-invariant, we have that as $L^{2}(\mu)$ functions,
\begin{equation}\label{1}
\begin{split}
&\quad\lim_{T\to\infty}\frac{1}{T}\int_{0}^{T}f_{1}(T_{t\alpha_{1}}x)\cdot\ldots\cdot f_{d}((T_{t\alpha_{d}}x))\,dt
\\&=\int_{0}^{1}\lim_{N\to\infty}\frac{1}{N}\sum_{n=0}^{N-1}(T_{r\alpha_{1}}f_{1})(T_{n\alpha_{1}}x)\cdot\ldots\cdot (T_{r\alpha_{d}}f_{d})(T_{n\alpha_{d}}x)\,dr
\\&=\int_{0}^{1}\lim_{N\to\infty}\frac{1}{N}\sum_{n=0}^{N-1}\mathbb{E}(T_{r\alpha_{1}}f_{1}\vert Z_{1})(T_{n\alpha_{1}}x)\cdot\ldots\cdot \mathbb{E}(T_{r\alpha_{d}}f_{d}\vert Z_{d})(T_{n\alpha_{d}}x)\,dr
\\&=\lim_{T\to\infty}\frac{1}{T}\int_{0}^{T}\mathbb{E}(f_{1}\vert Z_{1})(T_{t\alpha_{1}}x)\cdot\ldots\cdot \mathbb{E}(f_{d}\vert Z_{d})(T_{t\alpha_{d}}x)\,dt.
\end{split}
\end{equation}
Note that if a sequence of bounded functions converge both as $L^{2}(\mu)$ functions and almost everywhere, then both limits are the same. So (\ref{1}) also holds for $\mu$-a.e. $x\in X$ if all the limits in (\ref{1}) exist for $\mu$-a.e. $x\in X$.
\end{proof}

Let $X=N/\Gamma$, where $N$ is a ($k$-step) nilpotent group and $\Gamma$ is a discrete cocompact subgroup of $N$. Let $\mathcal{X}$ and $\mu$ be the Borel $\sigma$-algebra and Haar measure of $X$. Let $T_{g}\colon X\to X$, $T_{g}x=b_{g}\cdot x, g\in G$ for some group homomorphism $g\to b_{g}$ from $G$ to $N$. We say that $\X=(X,\mathcal{X},\mu,(T_{g})_{g\in G})$ is a \emph{($k$-step) $G$-nilsystem}. It is classical that we can choose $N$ to be simply connected, and we make this assumption throughout this paper. We remark that if $G$ is connected, then we may also assume that $N$ is connected.
The following theorem is a combination of Theorem \ref{H} in this paper and Theorem 3.7 of \cite{Z}. We omit the proof:
\begin{thm}\label{Z0}
	Let $m\in\mathbb{N}$, $\X$ be an ergodic $\mathbb{R}^{m}$-system and let $\alpha_{1},\dots,\alpha_{d}\in\mathbb{R}^{m}$. If the $\mathbb{Z}$-action $(T_{n\a_{i}})_{n\in\mathbb{Z}}$ is ergodic for $\X$ for all $1\leq i\leq d$, then $Z_{\widehat{\a_{1}},\dots,\widehat{\a_{d}}}(\X)$ is an inverse limit of $(d-1)$-step $\mathbb{R}^{m}$-nilsystems.
\end{thm}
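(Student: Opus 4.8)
\textbf{Proof proposal for Theorem \ref{Z0}.}

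The plan is to combine the $L^2$ convergence and characteristic factor identification for $\mathbb{Z}^m$-systems (Theorem \ref{H}) with the nilsystem structure theorem of Ziegler, Theorem 3.7 of \cite{Z}, which identifies Host-Kra factors of the form $Z_{\widehat{\a}_1,\dots,\widehat{\a}_d}$ for $\mathbb{Z}^m$-systems under an ergodicity hypothesis. The key point is that all the machinery in those two results is formulated for actions of $\mathbb{Z}$ (or $\mathbb{Z}^m$), generated by the commuting transformations $T_{\a_1},\dots,T_{\a_d}$, together with the ambient $\mathbb{R}^m$-action; the factor $Z_{\widehat{\a}_1,\dots,\widehat{\a}_d}(\X)$ depends only on this discrete data. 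So the first step is to observe that the $\mathbb{R}^m$-system $\X=(X,\mathcal{X},\mu,(T_g)_{g\in\mathbb{R}^m})$ gives rise to a $\mathbb{Z}^d$-system via $(n_1,\dots,n_d)\mapsto T_{n_1\a_1}\circ\cdots\circ T_{n_d\a_d}$, and that all the relevant Host-Kra objects $\mu_{\widehat{\a}_1,\dots,\widehat{\a}_d}$, $\Vert\cdot\Vert_{\X,\widehat{\a}_1,\dots,\widehat{\a}_d}$, $Z_{\widehat{\a}_1,\dots,\widehat{\a}_d}$ computed inside $\X$ coincide with those computed inside this $\mathbb{Z}^d$-system (this is immediate from the definitions, since they only ever invoke the transformations $T_{n\a_i}$, $n\in\mathbb{Z}$).

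Next I would invoke the ergodicity hypothesis: each $\mathbb{Z}$-action $(T_{n\a_i})_{n\in\mathbb{Z}}$ is ergodic for $\X$, which is exactly the hypothesis needed to apply Ziegler's structure theorem (Theorem 3.7 of \cite{Z}) to the associated $\mathbb{Z}^m$-system. That theorem yields that $Z_{\widehat{\a}_1,\dots,\widehat{\a}_d}$ is an inverse limit of $(d-1)$-step nilsystems for the discrete action. The remaining work is to upgrade the nilsystem structure from a $\mathbb{Z}$-action to the full $\mathbb{R}^m$-action: since $Z_{\widehat{\a}_1,\dots,\widehat{\a}_d}(\X)$ is a factor of $\X$ as an $\mathbb{R}^m$-system (it is a $G$-invariant sub-$\sigma$-algebra), and each finite-step piece in the inverse limit carries a nilsystem structure for the generating transformations $T_{\a_i}$, one argues by a standard group-theoretic extension that the $\mathbb{R}^m$-action on each such piece is given by translations on a nilmanifold, i.e., is an $\mathbb{R}^m$-nilsystem. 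Here one uses that the $T_g$ for $g$ in a neighborhood of $0$ are, on the nilfactor, forced to act by automorphisms commuting with the nilrotations, hence by nilrotations themselves, together with the continuity of $g\mapsto T_g x$; the simple connectedness of $N$ (assumed throughout) ensures the lifting of the one-parameter subgroups is unobstructed.

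The main obstacle is precisely this last upgrading step — passing from a $\mathbb{Z}$-nilsystem structure on the characteristic factor to an $\mathbb{R}^m$-nilsystem structure compatible with the ambient continuous action — rather than the identification of the factor itself, which is quoted. In fact, since the statement says the proof is omitted, I expect the intended argument is exactly the two-line reduction: apply Theorem \ref{H} to identify the characteristic factor controlling the multiple average, apply Theorem 3.7 of \cite{Z} (whose hypotheses are the stated ergodicity conditions) to get the nilsystem structure, and note that the $\mathbb{R}^m$-action descends to this factor and is realized by a continuous homomorphism into the nilpotent group, giving the $\mathbb{R}^m$-nilsystem. One should be slightly careful that Ziegler's theorem is stated for the right class of actions and that the connectedness remark about $N$ is used only when $G$ itself is connected, which $\mathbb{R}^m$ is; this is what makes the continuous extension clean.
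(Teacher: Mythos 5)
Your proposal takes essentially the same route as the paper, which states that Theorem \ref{Z0} is ``a combination of Theorem \ref{H} in this paper and Theorem 3.7 of \cite{Z}'' and omits the details; identifying $Z_{\widehat{\a}_{1},\dots,\widehat{\a}_{d}}(\X)$ via the ergodicity hypotheses and then citing Ziegler's structure theorem is exactly that combination. One remark: \cite{Z} is a structure theorem for $\mathbb{R}^{m}$-actions rather than $\mathbb{Z}^{m}$-actions, so the ``upgrading'' step you single out as the main obstacle is largely absorbed into the cited result, and the substantive point is matching the discrete Host--Kra factor with Ziegler's nilfactor under the stated ergodicity of each $(T_{n\a_{i}})_{n\in\mathbb{Z}}$.
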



We have the following structure theorem for $\mathbb{R}^{m}$-actions, which should be viewed as an analogue of the Host-Kra structure theorem \cite{HK}.

\begin{prop}\label{Z}
	Let $m\in\mathbb{N}$ and $\X$ be an ergodic $\mathbb{R}^{m}$-system.
	Then $Z_{\mathbb{R}^{m},\dots,\mathbb{R}^{m}}(\X)$ with $d$-copies of $\mathbb{R}^{m}$ is an inverse limit of $(d-1)$-step $\mathbb{R}^{m}$-nilsystems.	
	 Moreover, if $\alpha_{1},\dots,\alpha_{d}\in\mathbb{R}^{m}$ are such that the $\mathbb{R}$-action $(T_{t\alpha_{i}})_{t\in\mathbb{R}}$ is ergodic for $\X$ for all $1\leq i\leq d$, then $Z_{\alpha_{1},\dots,\alpha_{d}}(\X)=Z_{\mathbb{R}^{m},\dots,\mathbb{R}^{m}}(\X)$ with $d$-copies of $\mathbb{R}^{m}$.
\end{prop}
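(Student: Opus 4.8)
The plan is to reduce the statement about the $\mathbb{R}^m$-system $\X$ to the corresponding Host--Kra structure theorem for $\mathbb{Z}^m$-systems, exactly as Proposition~\ref{Hf} was reduced to Theorem~\ref{H} via Lemma~\ref{picks}. For the first assertion, I would apply Lemma~\ref{picks} with all the $\a_i$ chosen so that their $\mathbb{R}$-spans together generate $\mathbb{R}^m$ --- more precisely, pick a basis $\a_1,\dots,\a_m$ of $\mathbb{R}^m$ and note that $I((t\a_i)_{t\in\mathbb{R}})$ need not equal $I(\mathbb{R}^m)$ for an individual $\a_i$, so this naive choice does not immediately work. Instead the cleaner route is: by Lemma~\ref{PS2} (or Theorem~\ref{PS}, since $\X$ is ergodic) there is a direction $\a\in\mathbb{R}^m$ with $(T_{n\a})_{n\in\mathbb{Z}}$ ergodic; applying Lemma~\ref{picks} to the $d$-tuple $(\a,\dots,\a)$ gives $Z_{\a,\dots,\a}(\X)=Z_{\wh\a,\dots,\wh\a}(\X)$, and by Theorem~\ref{Z0} the latter is an inverse limit of $(d-1)$-step $\mathbb{R}^m$-nilsystems. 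It then remains to identify $Z_{\a,\dots,\a}(\X)$ with $Z_{\mathbb{R}^m,\dots,\mathbb{R}^m}(\X)$, which follows from Lemma~\ref{replacement}: since $\X$ is ergodic, $I(\mathbb{R}^m)$ is trivial, and $I((t\a)_{t\in\mathbb{R}})=I(\mathbb{R}^m)$ precisely because $(T_{t\a})_{t\in\mathbb{R}}$ is ergodic (ergodicity of the $\mathbb{Z}$-action forces ergodicity of the $\mathbb{R}$-action); hence replacing each copy of $(t\a)_{t\in\mathbb{R}}$ by $\mathbb{R}^m$ does not change the characteristic factor.

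For the ``moreover'' part, suppose $\alpha_1,\dots,\alpha_d$ are such that each $(T_{t\alpha_i})_{t\in\mathbb{R}}$ is ergodic for $\X$. Then $I((t\alpha_i)_{t\in\mathbb{R}})=I(\mathbb{R}^m)$ for every $i$, and Lemma~\ref{replacement} immediately yields $Z_{\alpha_1,\dots,\alpha_d}(\X)=Z_{\mathbb{R}^m,\dots,\mathbb{R}^m}(\X)$, since the tuple $(\mathbb{R}^m,\dots,\mathbb{R}^m)$ satisfies $I(\mathbb{R}^m)=I((t\alpha_i)_{t\in\mathbb{R}})$ coordinatewise. Combined with the first part this gives the full statement.

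The step I expect to require the most care is Theorem~\ref{Z0} being applicable at all: it requires the $\mathbb{Z}$-actions $(T_{n\a_i})_{n\in\mathbb{Z}}$ to be ergodic, whereas for an ergodic $\mathbb{R}^m$-system we are only guaranteed one good direction from Pugh--Shub, not that a prescribed $\a$ works. The resolution is that in the first part we are free to choose $\a$, so we simply take one in the full-measure set produced by Theorem~\ref{PS}, and then transport the conclusion back to $\mathbb{R}^m$ using the invariance of the characteristic factor under the replacements above. A secondary point to verify is the elementary implication ``$(T_{n\a})_{n\in\mathbb{Z}}$ ergodic $\Rightarrow$ $(T_{t\a})_{t\in\mathbb{R}}$ ergodic $\Rightarrow$ $I((t\a)_{t\in\mathbb{R}})=I(\mathbb{R}^m)$ when $\X$ is ergodic'', which is standard: an $(t\a)$-invariant set is in particular $(n\a)$-invariant, hence trivial, so $I((t\a)_{t\in\mathbb{R}})$ is trivial, and the trivial $\sigma$-algebra equals $I(\mathbb{R}^m)$ by ergodicity of $\X$. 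With these observations in place the proof is a short assembly of Lemmas~\ref{replacement}, \ref{picks}, Theorem~\ref{PS} and Theorem~\ref{Z0}, which is presumably why the authors chose to omit it or leave it brief.
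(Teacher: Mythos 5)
Your proposal is correct and follows essentially the same route as the paper: produce ergodic $\mathbb{Z}$-directions via Lemma \ref{PS2} (Pugh--Shub), identify the resulting $\mathbb{Z}$-action characteristic factor with $Z_{\mathbb{R}^{m},\dots,\mathbb{R}^{m}}$ using Lemma \ref{replacement} (triviality of all the relevant invariant $\sigma$-algebras), and invoke Theorem \ref{Z0}; the paper simply feeds the Host-type tuple $(\ha_{i},\wh{\alpha_{1}-\alpha_{i}},\dots,\wh{\alpha_{d}-\alpha_{i}})$ into Theorem \ref{Z0} instead of your diagonal tuple $(\a,\dots,\a)$, and your ``moreover'' argument is identical to the paper's. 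One cosmetic remark: Lemma \ref{picks} only gives $Z_{\wh{s\a},\dots,\wh{s\a}}=Z_{\a,\dots,\a}$ for almost every $s$ rather than for $s=1$, but this is harmless since, as you note, Lemma \ref{replacement} yields the identification directly once $(T_{n\a})_{n\in\mathbb{Z}}$ is ergodic.
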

\begin{proof}
		By Lemma \ref{PS2}, it is not hard to show that there exist a $\mathbb{Z}$-action $(T_{n\a_{i}})_{n\in\mathbb{Z}}$ ergodic for $\X$ for all $1\leq i\leq d$. By Lemma \ref{replacement}, we have that $Z_{\mathbb{R}^{m},\dots,\mathbb{R}^{m}}=Z_{\ha_{i},\wh{\alpha_{1}-\alpha_{i}},\dots,\wh{\alpha_{d}-\alpha_{i}}}$ with $d$-copies of $\mathbb{R}^{m}$, which is an inverse limit of $(d-1)$-step $\mathbb{R}^{m}$-nilsystems by Theorem \ref{Z0}.
	
If $\alpha_{1},\dots,\alpha_{d}\in\mathbb{R}^{m}$ are such that the $\mathbb{R}$-action $(T_{t\alpha_{i}})_{t\in\mathbb{R}}$ is ergodic for $\X$ for all $1\leq i\leq d$,
then $I((t\a_{i})_{t\in\mathbb{R}})=I(\mathbb{R}^{m})$ for all $1\leq i\leq d$.  By Lemma \ref{replacement}, we have that $Z_{\alpha_{1},\dots,\alpha_{d}}(\X)=Z_{\mathbb{R}^{m},\dots,\mathbb{R}^{m}}(\X)$.
\end{proof}

%

\section{Proof of the main theorems}\label{s5}
We prove Theorems \ref{main} and \ref{main3} in this section.
\begin{lem}\label{m0}
	Let $m\in\mathbb{N}$, $\nu$ be a Borel measure on $\mathbb{R}^{m}$ and $\X$ be an  $\mathbb{R}^{m}$-system. If $\nu(W(\X)+\beta)=0$ for all hyperplane $\beta\in\mathbb{R}^{m}$, then the set of all $(\alpha,\beta)\in\mathbb{R}^{2m}$ such that $Z_{\alpha,\alpha-\beta}=Z_{\beta,\alpha-\beta}=Z_{\mathbb{R}^{m},\mathbb{R}^{m}}$
	is of $\nu\times\nu$-measure one. 
\end{lem}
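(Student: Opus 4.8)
The plan is to reduce the whole statement to the fact that, by Lemma~\ref{replacement} (with $d=2$), the two-fold Host--Kra factor $Z_{\gamma_{1},\gamma_{2}}(\X)$ depends on $\gamma_{1},\gamma_{2}$ only through the $\sigma$-algebras $I((t\gamma_{1})_{t\in\mathbb{R}})$ and $I((t\gamma_{2})_{t\in\mathbb{R}})$. In particular, as soon as $\gamma_{1},\gamma_{2}\notin W(\X)$ we have $I((t\gamma_{1})_{t\in\mathbb{R}})=I((t\gamma_{2})_{t\in\mathbb{R}})=I(\mathbb{R}^{m})$, whence $Z_{\gamma_{1},\gamma_{2}}(\X)=Z_{\mathbb{R}^{m},\mathbb{R}^{m}}(\X)$. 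So it suffices to show that $\alpha$, $\beta$ and $\alpha-\beta$ simultaneously avoid $W(\X)$ for $\nu\times\nu$-a.e.\ $(\alpha,\beta)$.

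To that end I would consider the set
$$B:=\{(\alpha,\beta)\in\mathbb{R}^{2m}:\ \alpha\in W(\X)\ \text{or}\ \beta\in W(\X)\ \text{or}\ \alpha-\beta\in W(\X)\}.$$
By Theorem~\ref{geo}, $W(\X)$ is a countable union of proper subspaces, hence Borel; since $(\alpha,\beta)\mapsto\alpha$, $(\alpha,\beta)\mapsto\beta$ and $(\alpha,\beta)\mapsto\alpha-\beta$ are continuous, $B$ is Borel and Fubini's theorem is available. Taking $\beta=0$ in the hypothesis gives $\nu(W(\X))=0$, so the first two conditions defining $B$ cut out $\nu\times\nu$-null sets ($W(\X)\times\mathbb{R}^{m}$ and $\mathbb{R}^{m}\times W(\X)$, using $\nu(\mathbb{R}^{m})=1$). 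For the third, Fubini gives
$$(\nu\times\nu)\bigl(\{(\alpha,\beta):\alpha-\beta\in W(\X)\}\bigr)=\int_{\mathbb{R}^{m}}\nu\bigl(W(\X)+\beta\bigr)\,d\nu(\beta)=0$$
by the hypothesis applied to each $\beta$. Hence $(\nu\times\nu)(B)=0$.

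The last step is then immediate: for $(\alpha,\beta)\notin B$ we have $\alpha,\alpha-\beta\notin W(\X)$, so one application of Lemma~\ref{replacement} yields $Z_{\alpha,\alpha-\beta}(\X)=Z_{\mathbb{R}^{m},\mathbb{R}^{m}}(\X)$; and $\beta,\alpha-\beta\notin W(\X)$, so another application yields $Z_{\beta,\alpha-\beta}(\X)=Z_{\mathbb{R}^{m},\mathbb{R}^{m}}(\X)$. Thus the set in the statement contains $\mathbb{R}^{2m}\setminus B$, which has full $\nu\times\nu$-measure. The argument is essentially routine; the only point requiring a little care is the Borel measurability of $W(\X)$ so that Fubini is legitimate, and this is precisely what Theorem~\ref{geo} provides, so I do not anticipate a genuine obstacle.
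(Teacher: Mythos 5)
Your proof is correct and follows essentially the same route as the paper: reduce to showing that $\alpha$, $\beta$, $\alpha-\beta$ all avoid $W(\X)$ off a $\nu\times\nu$-null set, with the third condition handled by Fubini exactly as in the paper's computation for the set $E_{3}$. The only (harmless, arguably cleaner) difference is that you invoke Lemma~\ref{replacement} directly instead of the second part of Proposition~\ref{Z}, which is what the paper cites but which is itself just that application of Lemma~\ref{replacement}; your observation that Theorem~\ref{geo} supplies the Borel measurability of $W(\X)$ needed for Fubini is a point the paper leaves implicit.
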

\begin{proof}
	 By Proposition \ref{Z}, if 
	 all the three $\mathbb{R}$-actions $(T_{t\a})_{t\in\mathbb{R}}$, $(T_{t\beta})_{t\in\mathbb{R}}$ and $(T_{t(\a-\beta)})_{t\in\mathbb{R}}$ are ergodic for $\X$, then $Z_{\alpha,\alpha-\beta}=Z_{\beta,\alpha-\beta}=Z_{\mathbb{R}^{m},\mathbb{R}^{m}}$. So it suffices to show that 
		the sets
		$$E_{1}=\{(\alpha,\beta)\in\mathbb{R}^{2m}\colon \alpha\in W(\X)\}, E_{2}=\{(\alpha,\beta)\in\mathbb{R}^{2m}\colon \beta\in W(\X)\},$$ 
		and
		$$E_{3}=\{(\alpha,\beta)\in\mathbb{R}^{2m}\colon \alpha-\beta\in W(\X)\},$$  
		have zero $\nu\times\nu$ measure. Obviously, $$\nu\times\nu(E_{1})=\nu\times\nu(E_{2})=\nu(W(\X))=0.$$
		On the other hand,
		\begin{equation}\nonumber
		\begin{split}
		&\quad\nu\times\nu(E_{3})=\int_{\mathbb{R}^{m}}\nu(\{\alpha\in\mathbb{R}^{m}\colon \alpha\in W(\X)+\beta\})\,d\nu(\beta)
		\\&=\int_{\mathbb{R}^{m}}\nu(W(\X)+\beta)\,d\nu(\beta)=\int_{\mathbb{R}^{m}}0\,d\nu(\beta)=0.
		\end{split}
		\end{equation}
		This finishes proof.
\end{proof}

We start with a special case of Theorem \ref{main3}:
\begin{prop}\label{main21}
	Let $d, m\in\mathbb{N}$, $\nu$ be a Borel measure on $\mathbb{R}^{m}$ and $\X$ be an ergodic $\mathbb{R}^{m}$-system such that $Z_{\mathbb{R}^{m},\dots,\mathbb{R}^{m}}(\X)=\X$ with $d$-copies of $\mathbb{R}^{m}$. If $\nu(W(\X)+\beta)=0$ for all hyperplane $\beta\in\mathbb{R}^{m}$, then $\nu$ is weakly equidistributed for $\X$.
\end{prop}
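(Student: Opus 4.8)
The plan is to fix $f\in L^\infty(\mu)$, subtract $\int_X f\,d\mu$ to reduce to the case $\int_X f\,d\mu=0$, and write $F_t(x):=\int_{\mathbb R^m}f(T_{t\alpha}x)\,d\nu(\alpha)$; the goal becomes to produce a density‑$1$ set $A\subseteq\mathbb R$, depending only on $\X$ and $\nu$, such that $F_t(x)\to 0$ as $t\in A$, $t\to\infty$ for $\mu$‑a.e.\ $x$ and every mean‑zero $f\in L^\infty(\mu)$. Two preliminary remarks set the stage. First, for the trivial one‑point system the claim is vacuous, so we may assume $\X$ is nontrivial; then $I((t\cdot 0)_{t\in\mathbb R})=\mathcal X\neq I(\mathbb R^m)$, so $0\in W(\X)$, and hence $\nu(\{\beta\})\le\nu(W(\X)+\beta)=0$, i.e.\ $\nu$ is atomless; moreover, exactly as in the computation of $\nu\times\nu(E_3)$ in the proof of Lemma \ref{m0}, the hypothesis gives $(\nu\times\nu)\{(\alpha,\beta)\colon\alpha-\beta\in W(\X)\}=0$. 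Second, since $Z_{\mathbb R^m,\dots,\mathbb R^m}(\X)=\X$ with $d$ copies, Proposition \ref{Z} realizes $\X$ as an inverse limit of $(d-1)$‑step $\mathbb R^m$‑nilsystems, which are good for double Birkhoff averages (see Section \ref{s21}); this will be used at the end.

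The heart of the matter is the $L^2$ estimate
$$\lim_{T\to\infty}\frac1T\int_0^T\|F_t\|_{L^2(\mu)}^2\,dt=0.$$
To see this, expand $\|F_t\|_{L^2(\mu)}^2=\int_{\mathbb R^m}\int_{\mathbb R^m}\langle T_{t\alpha}f,T_{t\beta}f\rangle\,d\nu(\alpha)\,d\nu(\beta)$, use the measure‑preservation of $T_{t\beta}$ and commutativity to rewrite $\langle T_{t\alpha}f,T_{t\beta}f\rangle=\langle T_{t(\alpha-\beta)}f,f\rangle$, average in $t$, and interchange integrals:
$$\frac1T\int_0^T\|F_t\|_{L^2(\mu)}^2\,dt=\int_{\mathbb R^m}\int_{\mathbb R^m}\Bigl(\frac1T\int_0^T\langle T_{t(\alpha-\beta)}f,f\rangle\,dt\Bigr)\,d\nu(\alpha)\,d\nu(\beta).$$
By the mean ergodic theorem for the $\mathbb R$‑action $(T_{t(\alpha-\beta)})_{t\in\mathbb R}$, the inner average converges as $T\to\infty$ to $\|\mathbb E(f\mid I((t(\alpha-\beta))_{t\in\mathbb R}))\|_{L^2(\mu)}^2$; for $(\nu\times\nu)$‑a.e.\ $(\alpha,\beta)$ we have $\alpha-\beta\notin W(\X)$, so this action is ergodic and the limit is $\bigl|\int_X f\,d\mu\bigr|^2=0$. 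Since the integrand is bounded by $\|f\|_{L^\infty(\mu)}^2$, dominated convergence yields the estimate. This step needs only the mean ergodic theorem and the hyperplane hypothesis.

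Next I would deduce that $\frac1T\int_0^T|F_t(x)|^2\,dt\to 0$ for $\mu$‑a.e.\ $x$. Indeed $\frac1T\int_0^T|F_t(x)|^2\,dt=\int_{\mathbb R^m}\int_{\mathbb R^m}\bigl(\frac1T\int_0^T f(T_{t\alpha}x)\overline{f(T_{t\beta}x)}\,dt\bigr)\,d\nu(\alpha)\,d\nu(\beta)$; for each fixed $(\alpha,\beta)$ the inner average converges for $\mu$‑a.e.\ $x$ because $\X$ is good for double Birkhoff averages, so by Fubini on $X\times\mathbb R^m\times\mathbb R^m$ it converges, for $\mu$‑a.e.\ $x$, for $(\nu\times\nu)$‑a.e.\ $(\alpha,\beta)$, and then dominated convergence in $(\alpha,\beta)$ gives convergence of $\frac1T\int_0^T|F_t(x)|^2\,dt$ for $\mu$‑a.e.\ $x$; its $X$‑integral tends to $0$ by the previous step, so Fatou's lemma forces the limit to be $0$ for $\mu$‑a.e.\ $x$. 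To construct $A$ I would apply this to each member of a countable family $\mathcal D\subseteq L^\infty(\mu)$ of mean‑zero functions, dense in $L^2(\mu)$ among mean‑zero functions, combine the resulting Cesàro statements by a diagonal argument, and use the uniform contractions $\|F_t^{h}\|_{L^\infty(\mu)}\le\|h\|_{L^\infty(\mu)}$ and $\|F_t^{h}\|_{L^2(\mu)}\le\|h\|_{L^2(\mu)}$ (Jensen) to pass from $\mathcal D$ to all of $L^\infty(\mu)$.

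The hard part will be this last step: converting the family of assertions ``$\frac1T\int_0^T|F_t^f(x)|^2\,dt\to 0$ for $\mu$‑a.e.\ $x$'' into a \emph{single} density‑$1$ set $A$ along which $F_t^f(x)\to 0$ holds for $\mu$‑a.e.\ $x$ and \emph{all} $f\in L^\infty(\mu)$ at once. A crude diagonalization only yields convergence in measure (equivalently, in $L^2(\mu)$) along $A$, and extracting almost‑everywhere convergence along a \emph{fixed} $A$ is where the nilsystem structure of $\X$ furnished by Proposition \ref{Z} enters essentially: one first treats all continuous $f$ simultaneously — approximating uniformly by functions adapted to the tower of nilfactors and arranging, along one common density‑$1$ set, the required cancellation (in the torus example this is just $F_t^f(x)=\sum_k\widehat f(k)\,\widehat\nu(tk)\,e(k\cdot x)$ with $\widehat\nu(tk)\to 0$ for every $k$ along a common density‑$1$ set) — and only then passes to general bounded $f$ using regularity of $\mu$ together with the contractions above. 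Apart from this point, the argument is a routine combination of the mean ergodic theorem, the hyperplane hypothesis via Lemma \ref{m0}, and the pointwise double‑Birkhoff convergence supplied by the structure theorem.
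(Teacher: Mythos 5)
Your core argument is correct, and it reaches the same endpoint as the paper's proof (the estimate $\lim_{T\to\infty}\frac1T\int_0^T\bigl|\int_{\mathbb R^m}f(T_{t\alpha}x)\,d\nu(\alpha)\bigr|^2\,dt=0$ for $\mu$-a.e.\ $x$ and mean-zero $f$), but by a genuinely different and more elementary route. The paper, after reducing via Proposition \ref{Z} to a connected nilsystem, identifies the pointwise limit of $\frac1T\int_0^Tf(T_{t\alpha}x)\overline f(T_{t\beta}x)\,dt$ for $\nu\times\nu$-a.e.\ $(\alpha,\beta)$ by proving that the orbit $((b_{t\alpha}\Gamma,b_{t\beta}\Gamma))_{t\in\mathbb R}$ equidistributes in $X\times X$; this is where Leibman's theorem and the horizontal-character analysis come in, and where the hypothesis $\nu(W(\X)+\beta)=0$ enters through the sets $B_{\chi,\chi'}\subseteq W(\X)+\gamma_0$. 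You instead identify the limit only in the averaged sense, via the von Neumann mean ergodic theorem for the flow in direction $\alpha-\beta$ (which is ergodic off a $\nu\times\nu$-null set by the same hypothesis), and then upgrade to the pointwise statement by combining the a.e.\ existence of the double Birkhoff limit with Fubini, dominated convergence and Fatou. This buys you a proof with no nilmanifold equidistribution input at all, at the price of needing the system to be good for double Birkhoff averages --- which is not a stated hypothesis of the proposition, so you must (as you do) extract it from $Z_{\mathbb R^m,\dots,\mathbb R^m}(\X)=\X$ via Proposition \ref{Z} and the distality of inverse limits of nilsystems, i.e.\ the Donoso--Sun result cited in Section \ref{s21}. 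That is a legitimate but external input; the paper's route gets the needed pointwise convergence for free from nilmanifold theory (for continuous $f$, every $x$) plus approximation. Note also that your argument, unlike the paper's, never really uses the structure except through this goodness property, so it in effect proves the first half of Theorem \ref{main3} directly, bypassing the decomposition into $Z_{\mathbb R^m,\mathbb R^m}$-measurable and orthogonal parts.

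On your ``hard part'': you are right that the Ces\`aro-square estimate by itself only produces a density-one set depending on $x$ and $f$, whereas the definition of weak equidistribution quantifies $A$ before both. But the paper's own proof of Proposition \ref{main21} (and of Theorem \ref{main3}) also stops at exactly this estimate, handling the dependence on $f$ by separability and leaving the dependence on $x$ implicit; so you have matched the paper's argument, and your closing sketch addresses a subtlety the paper does not engage with rather than a step your proof is missing relative to it. Your proposed fix via the tower of nilfactors is only a sketch and would need to be carried out to be a complete improvement, but it does not affect the comparison with the paper's proof.
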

\begin{proof}
It  suffices  to show that for all $f\in L^{\infty}(\mu)$ with $\int_{X}f\,d\mu=0$, 
	\begin{equation}\nonumber
	\begin{split}
	\lim_{T\to\infty}\frac{1}{T}\int_{0}^{T}\vert\int_{\mathbb{R}^{m}}f(T_{t\alpha}x)\,d\nu(\alpha)\vert^{2}\,dt=0
	\end{split}
	\end{equation}
	for $\mu$-a.e. $x\in X$. Let $J_{0}$ denote the set of all $(\alpha,\beta)\in\mathbb{R}^{2m}$ such that $Z_{\alpha,\alpha-\beta}=Z_{\beta,\alpha-\beta}=Z_{\mathbb{R}^{m},\mathbb{R}^{m}}$. By Lemma \ref{m0}, $\nu\times\nu(J_{0})=1$.
	So
	\begin{equation}\label{equ:24}
	\begin{split}
	&\quad \lim_{T\to\infty}\frac{1}{T}\int_{0}^{T}\vert\int_{\mathbb{R}^{m}}f(T_{t\alpha}x)\,d\nu(\alpha)\vert^{2}\,dt
	\\&=\lim_{T\to\infty}\frac{1}{T}\int_{0}^{T}\int_{\mathbb{R}^{2m}}f(T_{t\alpha}x)\overline{f}(T_{t\beta}x)\,d\nu(\alpha)d\nu(\beta)\,dt
	\\&=\int_{\mathbb{R}^{2m}}\Bigl(\lim_{T\to\infty}\frac{1}{T}\int_{0}^{T}f(T_{t\alpha}x)\overline{f}(T_{t\beta}x)\,dt\Bigr)\,d\nu(\alpha)d\nu(\beta)
		\\&=\int_{J_{0}}\Bigl(\lim_{T\to\infty}\frac{1}{T}\int_{0}^{T}f(T_{t\alpha}x)\overline{f}(T_{t\beta}x)\,dt\Bigr)\,d\nu(\alpha)d\nu(\beta).
	\end{split}
	\end{equation}
	
	Since $Z_{\mathbb{R}^{m},\dots,\mathbb{R}^{m}}(\X)=\X$,
	by Proposition \ref{Z} and an approximation argument, we may assume without loss of generality that $\X$ is an $\mathbb{R}^{m}$-nilsystem.
	We may assume without loss of generality that $\X$ is connected.  
	
	Suppose that $X=N/\Gamma$, where $N$ is a ($k$-step) nilpotent group and $\Gamma$ is a discrete cocompact subgroup of $N$. Let $\mathcal{X}$ and $\mu$ be the Borel $\sigma$-algebra and Haar measure of $X$. Assume that $T_{g}\colon X\to X$, $T_{g}x=b_{g}\cdot x, g\in G$ for some group homomorphism $g\to b_{g}$ from $G$ to $N$.
	Let $J$ denote the set of all $(\alpha,\beta)\in\mathbb{R}^{2m}$ such that $((b_{t\alpha}\Gamma,b_{t\beta}\Gamma))_{t\in\mathbb{R}}$ is equidistributed on $X\times X$. If $\nu\times\nu(J)=1$, then $$(\ref{equ:24})=\int_{J}\int_{X\times X}f\otimes \overline{f}\,d\mu\times\mu\,d\nu(\alpha)d\nu(\beta)=\int_{J}\Bigl\vert\int_{X}f\,d\mu\Bigr\vert^{2}\,d\nu(\alpha)d\nu(\beta)=0,$$
	which finishes the proof.
	
	We now prove that $\nu\times\nu(J)=1$.
	Since $\mathbb{R}^{m}$ is a connected group, we may assume that $X=N/\Gamma$ with $N$ being connected and simply connected. 
	Note that for all nontrivial horizontal character $\chi$ of $X$,\footnote{A \emph{horizontal character} on $X=N/\Gamma$ is a continuous group homomorphism $\chi$ from $N$ to $\mathbb{T}$ such that $\chi(\Gamma)=1$.} the complement of the set $$A_{\chi}:=\{\alpha\in\mathbb{R}^{m}\colon \chi(b_{\alpha})\neq 1\}$$
	is contained in $W(\X)$. So $\nu(A_{\chi})=1$.
	
%

	
	Let $$A=\{\alpha\in\mathbb{R}^{m}\colon \chi(b_{\alpha})\neq 1 \text{ for all nontrivial horizontal character } \chi\}.$$ 
	Since there are only countably many horizontal characters, 
	$\nu(A)=1$.
	
	Fix $\alpha\in A$.
	Let $B_{\alpha}$ denote the set of $\beta$ such that $((b_{t\alpha}\Gamma,b_{t\beta}\Gamma))_{t\in\mathbb{R}}$ is not equidistributed on $X\times X$.
	Then for $\beta\in B_{\alpha}$, by Leibman's Theorem \cite{L}, there exists a nontrivial horizontal character $\chi_{\alpha,\beta}$ of $X\times X$ such that $\chi_{\alpha,\beta}(b_{\alpha},b_{\beta})=1$. Since $\alpha\in A$, there exists horizontal characters $\chi$ and $\chi'$ of $X$ such that $\chi(b_{\alpha})=\chi'(b_{\beta})\neq 1$.
	
	For all horizontal characters $\chi$ and $\chi'$ of $X$, let 
	$$B_{\chi,\chi'}=\{\gamma\in\mathbb{R}^{m}\colon\chi(b_{\alpha})=\chi'(b_{\gamma}) \}.$$
	$B_{\chi,\chi'}$ is obviously non-empty. Pick any 
	$\gamma_{0}\in B_{\chi,\chi'}$. Then $$B_{\chi,\chi'}=\{\gamma\in\mathbb{R}^{m}\colon \chi'(b_{\gamma-\gamma_{0}})=1\}$$
	which is contained in $W(\X)+\gamma_{0}$. By assumption, $\nu(B_{\chi,\chi'})=0$.

	Since $B_{\alpha}=\cup_{\chi\not\equiv 1}\cup_{\chi'}B_{\chi,\chi'}$, we have that $\nu(B_{\alpha})=0$. So $\nu\times\nu(J^{c})\leq \nu\times\nu(\{(\alpha,\beta)\colon \alpha\in A, \beta\in B_{\alpha}\})=0$. This finishes the proof. 
\end{proof}

\begin{proof}[Proof of Theorem \ref{main3}]
	We start with the first part. Let $\X=(X,\mathcal{X},\mu, (T_{\alpha})_{\alpha\in \mathbb{R}^{m}})$ be an  ergodic $\mathbb{R}^{m}$-system which is good for double Birkhoff averages such that $\nu(W(\X)+\beta)=0$ for all $\beta\in\mathbb{R}^{m}$. Since $\X$ is separable, it suffices to show that 
	for all $f\in L^{\infty}(\mu)$, there exists $A\subseteq\mathbb{R}$ of density 1 such that
	\begin{equation}\label{31}
	\begin{split}
	\lim_{t\in A,t\to\infty}\int_{\mathbb{R}^{m}}f(T_{t \alpha}x)\,d\nu(\alpha)=\int_{X}f\,d\mu
	\end{split}
	\end{equation}
	for $\mu$-a.e. $x\in X$. Suppose first that $f$ is measurable with respect to $Z_{\mathbb{R}^{m},\mathbb{R}^{m}}$. Consider the factor system  $\Y=(X,Z_{\mathbb{R}^{m},\mathbb{R}^{m}},\mu, (T_{\alpha})_{\alpha\in \mathbb{R}^{m}})$ of $\X$. Since $W(\Y)\subseteq W(\X)$, we have that $\nu(W(\Y)+\beta)=0$ for all $\beta\in\mathbb{R}^{m}$. 
	So (\ref{31}) follows from Propositions \ref{main21}.
	
	 We now assume that $\mathbb{E}(f\vert Z_{\mathbb{R}^{m},\mathbb{R}^{m}})=0$. To show (\ref{31}),
	it  suffices  to show that 
	\begin{equation}\nonumber
	\begin{split}
	\lim_{T\to\infty}\frac{1}{T}\int_{0}^{T}\vert\int_{\mathbb{R}^{m}}f(T_{t\alpha}x)\,d\nu(\alpha)\vert^{2}\,dt=0
	\end{split}
	\end{equation}
	for $\mu$-a.e. $x\in X$. 
	By Proposition \ref{Hf} and the assumption that $\X$ is good for double Birkhoff averages,
	\begin{equation}\nonumber
	\begin{split}
	&\quad \lim_{T\to\infty}\frac{1}{T}\int_{0}^{T}\vert\int_{\mathbb{R}^{m}}f(T_{t\alpha}x)\,d\nu(\alpha)\vert^{2}\,dt
	\\&=\lim_{T\to\infty}\frac{1}{T}\int_{0}^{T}\int_{\mathbb{R}^{2m}}f(T_{t\alpha}x)\overline{f}(T_{t\beta}x)\,d\nu(\alpha)d\nu(\beta)\,dt
	\\&=\int_{\mathbb{R}^{2m}}\Bigl(\lim_{T\to\infty}\frac{1}{T}\int_{0}^{T}f(T_{t\alpha}x)\overline{f}(T_{t\beta}x)\,dt\Bigr)\,d\nu(\alpha)d\nu(\beta)
	\\&=\int_{\mathbb{R}^{2m}}\Bigl(\lim_{T\to\infty}\frac{1}{T}\int_{0}^{T}\mathbb{E}(f\vert Z_{\alpha,\alpha-\beta})(T_{t\alpha}x)\mathbb{E}(\overline{f}\vert Z_{\beta,\alpha-\beta})(T_{t\beta}x)\,dt\Bigr)\,d\nu(\alpha)d\nu(\beta).
	\end{split}
	\end{equation}
	Let $J$ denote the set of all $(\alpha,\beta)\in\mathbb{R}^{2m}$ such that $Z_{\alpha,\alpha-\beta}=Z_{\beta,\alpha-\beta}=Z_{\mathbb{R}^{m},\mathbb{R}^{m}}$. By Lemma \ref{m0}, $\nu\times\nu(J)=1$.
	So
	\begin{equation}\nonumber
	\begin{split}
	&\quad \int_{\mathbb{R}^{2m}}\Bigl(\lim_{T\to\infty}\frac{1}{T}\int_{0}^{T}\mathbb{E}(f\vert Z_{\alpha,\alpha-\beta})(T_{t\alpha}x)\mathbb{E}(\overline{f}\vert Z_{\beta,\alpha-\beta})(T_{t\beta}x)\,dt\Bigr)\,d\nu(\alpha)d\nu(\beta)
	\\&=\int_{J}\Bigl(\lim_{T\to\infty}\frac{1}{T}\int_{0}^{T}\mathbb{E}(f\vert Z_{\mathbb{R}^{m},\mathbb{R}^{m}})(T_{t\alpha}x)\mathbb{E}(\overline{f}\vert Z_{\mathbb{R}^{m},\mathbb{R}^{m}})(T_{t\beta}x)\,dt\Bigr)\,d\nu(\alpha)d\nu(\beta)=0
	\end{split}
	\end{equation}
	and we are done.

\

%

We now prove the second part. Let $\X=(X,\mathcal{X},\mu,(T_{\a})_{\a\in\mathbb{R}^{m}})$ be an ergodic $\mathbb{R}^{m}$-system and suppose that $\nu(W(\X)+\beta)>0$ for some $\beta\in\mathbb{R}^{m}$. We wish to show that $\nu$ is not weakly equidistributed for $\X$. 
By Theorem \ref{geo}, there exists $\beta\in\mathbb{R}^{m}$ and a subspace $V$ of $\mathbb{R}^{m}$ contained in $W(\X)$ such that $\nu(V+\beta)>0$ and $\nu(V'+\beta)=0$ for every proper subspace $V'$ of $V$. Again by Theorem \ref{geo}, it is not hard to show that there exists an $(m-1)$-dimensional subspace $V_{0}$ of $\mathbb{R}^{m}$ which contains $V$ such that for every subspace $V''$ of $\mathbb{R}^{m}$ which is contained in $W(\X)$ but not contained in $V$, we have that $V''+V_{0}=\mathbb{R}^{m}$.  Let $U$ be the 1-dimensional subspace of $\mathbb{R}^{m}$ which is the orthogonal complement of $V_{0}$, and let $\pi\colon \mathbb{R}^{m}\to U$ be the natural projection.
Let $\Y=(X,I(V_{0}),\mu,(T_{\a})_{\a\in U})$. Since  for all $V_{0}$-invariant function $f$, we have  
$$\int_{\mathbb{R}^{m}}f(T_{t \alpha}x)\,d\nu(\alpha)=\int_{\mathbb{R}^{m}}f(T_{t \pi(\alpha)}x)\,d\nu(\alpha)=\int_{U}f(T_{t\alpha}x)\,d\pi_{\ast}\nu(\alpha),$$
where $\pi_{\ast}\nu$ is the push-forward of $\nu$ under $\pi$. In order to show that $\nu$ is not weakly equidistributed for the ergodic $\mathbb{R}^{m}$-system $\X$, it suffices to show that $\pi_{\ast}\nu$ is not weakly equidistributed for the ergodic $U$-system $\Y$. 

We may decompose $\pi_{\ast}\nu$ as the sum of two (unnormalized) measures $\pi_{\ast}\nu=\nu_{c}+\nu_{d}$, where $\nu_{c}(\{\beta\})=0$ for all $\beta\in U$, and $\nu_{d}$ is supported on at most countably many points on $U$. Since $\pi_{\ast}\nu(\{\pi(\beta)\})=\nu(V_{0}+\beta)\geq \nu(V+\beta)>0$, we have that $\nu_{d}\neq 0$.
Since $U$ is isomorphic to $\mathbb{R}$, applying the conclusion of the first part, we have that (the normalization of) $\nu_{c}$ is weakly equidistributed for $\Y$. So it suffices to show that (the normalization of) $\nu_{d}$ is not weakly equidistributed for $\Y$.

Suppose that (the normalization of) $\nu_{d}$ is weakly equidistributed for $\Y$.
We may assume that $\nu_{d}=\sum_{j\in J}c_{j}\delta_{\a_{j}}$ for some nonempty countable index set $J$, $c_{j}>0$, $\a_{j}\in U$, where $\a_{j}\neq \a_{j'}$ for $j\neq j'$.  We assume without loss of generality that $\sum_{j\in J}c_{j}=1$. Then for all $f\in L^{\infty}(\mu)$ with $\int_{X}f\,d\mu=0$,
$$\lim_{T\to\infty}\frac{1}{T}\int_{0}^{T}\vert\int_{\mathbb{R}^{m}}f(T_{t \alpha}x)\,d\nu(\alpha)\vert^{2}\,dt=\lim_{T\to\infty}\frac{1}{T}\int_{0}^{T}\vert \sum_{j\in J}c_{j}f(T_{t\a_{j}}x)\vert^{2}\,dt=0$$
for $\mu$-a.e. $x\in X$. Since $f\in L^{\infty}(\mu)$, we have that the $L^{1}(\mu)$ limit of $$\lim_{T\to\infty}\frac{1}{T}\int_{0}^{T}\vert \sum_{j\in J}c_{j}f(T_{t\a_{j}}x)\vert^{2}\,dt$$ also equals to 0.
Since $U$ is of dimension 1, by Theorem \ref{geo}, for all $\a\in U\backslash\{0\}$, $(T_{t\a})_{t\in\mathbb{R}}$ is ergodic for $\Y$. So 
$$\lim_{T\to\infty}\frac{1}{T}\int_{0}^{T}(\int_{X}f(T_{t\a}x)\overline{f}(x)\,d\mu)\,dt=\bold{1}_{\a=0}\cdot\Vert f\Vert^{2}_{L^{2}(\mu)}.$$
So,
\begin{equation}\nonumber
\begin{split}
&\quad \int_{X}\lim_{T\to\infty}\frac{1}{T}\int_{0}^{T}\vert \sum_{j\in J}c_{j}f(T_{t\a_{j}}x)\vert^{2}\,dt\,d\mu(x)
\\&=\sum_{j,j'\in J}c_{j}c'_{j}\lim_{T\to\infty}\frac{1}{T}\int_{0}^{T}(\int_{X}f(T_{t\a_{j}}x)\overline{f}(T_{t\a_{j'}}x)\,d\mu)\,dt
\\&=\sum_{j,j'\in J}c_{j}c'_{j}\lim_{T\to\infty}\frac{1}{T}\int_{0}^{T}(\int_{X}f(T_{t(\a_{j}-\a'_{j})}x)\overline{f}(x)\,d\mu)\,dt
\\&=\sum_{j\in J}c^{2}_{j}\cdot\Vert f\Vert^{2}_{L^{2}(\mu)}>0
\end{split}
\end{equation}
whenever $\Vert f\Vert_{L^{2}(\mu)}>0$ (since $J$ is nonempty), a contradiction. This proves the second part of the theorem.
\end{proof}

We are now ready to prove Theorem \ref{main}.
\begin{proof}[Proof of Theorem \ref{main}] 
	Suppose first that $\nu$ is a Borel measure on $\mathbb{R}^{m}$  such that $\nu(\ell)=0$ for all hyperplanes $\ell$ of $\mathbb{R}^{m}$. Let $\X$ be an ergodic $\mathbb{R}^{m}$-system which is good for double Birkhoff ergodic averages. By Theorem \ref{geo}, for every $\beta\in\mathbb{R}^{m}$, $W(\X)+\beta$ is contained in an at most countable union of hyperplanes of $\mathbb{R}^{m}$. So $\nu(W(\X)+\beta)=0$. By Theorem \ref{main3}, $\nu$ is weakly equidistributed for $\X$. This proofs the ``if" part.

	We now prove the ``only if" part. Suppose that there exists a hyperplane of $\mathbb{R}^{m}$   $$\ell=\{\alpha\in\mathbb{R}^{m}\colon \alpha\cdot\beta=c\}$$ such that $\nu(\ell)\neq 0$, where $\beta\in\mathbb{R}^{m}$ and $c\in\mathbb{R}$. 
	Let $(X,\mathcal{X},\mu)$ be the 1 dimensional torus. 
	Let $(S_{s})_{s\in\mathbb{R}}$ be the ergodic $\mathbb{R}$-action on $X$ given by $S_{s}x=x+s \mod 1$, $x\in[0,1)$. We now consider the $\mathbb{R}^{m}$-system $\X_{0}=(X,\mathcal{X},\mu,(T_{\alpha})_{\alpha\in\mathbb{R}^{m}})$, where $T_{\alpha}=S_{\pi(\alpha)}$ for all $\alpha\in\mathbb{R}^{m}$ with $\pi\colon\mathbb{R}^{m}\to\mathbb{R}$ being the linear map given by $\pi(\alpha)=\alpha\cdot\beta$, $\alpha\in\mathbb{R}^{m}$. This system is obviously good for Birkhoff double averages.

	Note that $W(\X_{0})=\{\alpha\in\mathbb{R}^{m}\colon \alpha\cdot\beta=0\}$ and so $\nu(\ell)=\nu(W(\X_{0})+c)=0$. By Theorem \ref{main3},  $\nu$ is not weakly equidistributed for $\X_{0}$.
\end{proof}

\section{Systems good for double Birkhoff averages}\label{s21}
In this section, we discuss to what extend do the main theorems of this paper apply, i.e. which systems are good for double Birkhoff averages. 
Using an argument similar to the proof of Proposition \ref{Hf} (or use Theorem 3.1 of \cite{BLM}), it is not hard to show:
\begin{lem}\label{18}
	Let $\X=(X,\mathcal{X},\mu,(T_{\alpha})_{\alpha\in \mathbb{R}^{m}})$ be an $\mathbb{R}^{m}$-system. If for all $f_{1},f_{2}\in L^{\infty}(\mu)$ and $\alpha_{1},\alpha_{2}\in\mathbb{R}^{m}$, the limit
	\begin{equation}\label{19}
	\begin{split}
	\lim_{N\to\infty}\frac{1}{N}\sum_{n=0}^{N-1}f_{1}(T_{\alpha_{1}n}x)f_{2}(T_{\alpha_{2}n}x)
	\end{split}
	\end{equation}
	exists for $\mu$-a.e. $x\in X$, then $\X$ is good for double Birkhoff averages.
\end{lem}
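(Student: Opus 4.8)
The plan is to deduce the convergence of the continuous average from the assumed convergence of the discrete double Birkhoff averages, by the ``unfolding'' device that appears in the proof of Proposition \ref{Hf}: an integral over $[0,N]$ is written as a sum over $n$ of integrals over unit intervals, which turns the continuous average into an average over $r\in[0,1)$ of discrete averages with the translated functions $T_{r\alpha_{i}}f_{i}$. Throughout, fix $f_{1},f_{2}\in L^{\infty}(\mu)$ and $\alpha_{1},\alpha_{2}\in\mathbb{R}^{m}$ and set $C:=\Vert f_{1}\Vert_{L^{\infty}(\mu)}\Vert f_{2}\Vert_{L^{\infty}(\mu)}$.

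First I would reduce to integer times. Since the integrand is bounded by $C$, for $T\in[N,N+1)$ with $N\in\mathbb{N}$ one has $\bigl\vert\frac{1}{T}\int_{0}^{T}f_{1}(T_{t\alpha_{1}}x)f_{2}(T_{t\alpha_{2}}x)\,dt-\frac{1}{N}\int_{0}^{N}f_{1}(T_{t\alpha_{1}}x)f_{2}(T_{t\alpha_{2}}x)\,dt\bigr\vert\leq 2C/N$, so it suffices to prove that $\frac{1}{N}\int_{0}^{N}f_{1}(T_{t\alpha_{1}}x)f_{2}(T_{t\alpha_{2}}x)\,dt$ converges as $N\to\infty$ through positive integers, for $\mu$-a.e. $x\in X$. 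Next, substituting $t=n+r$ and using $T_{(n+r)\alpha_{i}}=T_{n\alpha_{i}}\circ T_{r\alpha_{i}}$, I would rewrite this for each fixed $x$ (only separate measurability of the action is needed for the identity itself) as
$$\frac{1}{N}\int_{0}^{N}f_{1}(T_{t\alpha_{1}}x)f_{2}(T_{t\alpha_{2}}x)\,dt=\int_{0}^{1}F_{N}(r,x)\,dr,\qquad F_{N}(r,x):=\frac{1}{N}\sum_{n=0}^{N-1}(T_{r\alpha_{1}}f_{1})(T_{n\alpha_{1}}x)(T_{r\alpha_{2}}f_{2})(T_{n\alpha_{2}}x);$$
here $(r,x)\mapsto F_{N}(r,x)$ is bounded by $C$ and jointly measurable on $[0,1)\times X$, using that the action map $\mathbb{R}^{m}\times X\to X$ is jointly measurable.

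The key step is to apply the hypothesis, for each fixed $r$, to the pair $T_{r\alpha_{1}}f_{1},T_{r\alpha_{2}}f_{2}\in L^{\infty}(\mu)$ (with the same $\alpha_{1},\alpha_{2}$): this gives that $\lim_{N\to\infty}F_{N}(r,x)$ exists for $\mu$-a.e. $x$. Letting $E$ be the measurable set of those $(r,x)\in[0,1)\times X$ with $\limsup_{N}F_{N}(r,x)\neq\liminf_{N}F_{N}(r,x)$, each $\mu$-section $E_{r}$ is null, so Fubini gives $(\mathrm{Leb}\times\mu)(E)=0$; hence for $\mu$-a.e. $x$ the limit $\Phi(r,x):=\lim_{N\to\infty}F_{N}(r,x)$ exists for Lebesgue-a.e. $r\in[0,1)$. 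Fixing such an $x$ and applying the dominated convergence theorem in $r$ (with dominating constant $C$), I obtain $\int_{0}^{1}F_{N}(r,x)\,dr\to\int_{0}^{1}\Phi(r,x)\,dr$, which by the displayed identity is exactly the convergence required in the first reduction; hence the continuous average converges for $\mu$-a.e. $x$, i.e. $\X$ is good for double Birkhoff averages.

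I expect the main obstacle to be the measure-theoretic bookkeeping in the key step: one must verify the joint measurability of $(r,x)\mapsto F_{N}(r,x)$ and of the set where the $N$-limit fails to exist, so that Fubini legitimately converts the uncountable family of ``$\mu$-a.e.\ in $x$'' statements, one for each $r$, into a single ``$\mu$-a.e.\ in $x$'' statement; the remaining steps are routine. Alternatively, as the statement of the lemma indicates, one may instead invoke Theorem 3.1 of \cite{BLM} to pass from the discrete to the continuous averages directly.
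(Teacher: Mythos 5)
Your argument is correct and is exactly the route the paper intends: the paper gives no formal proof, only the hint that one should argue ``similar to the proof of Proposition \ref{Hf}'' or cite Theorem 3.1 of \cite{BLM}, and your unfolding of $\frac{1}{N}\int_{0}^{N}$ into $\int_{0}^{1}\bigl(\frac{1}{N}\sum_{n=0}^{N-1}\bigr)\,dr$, followed by Fubini over $r$ and dominated convergence, is precisely that argument made explicit. The measurability caveats you flag (joint measurability of the action, hence of $F_{N}$ and of the divergence set) are at the same level of rigor the paper itself operates at, so there is nothing to add.
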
 

Combining Lemma \ref{18} with past results in literature, we have that 
the following $\mathbb{R}^{m}$-systems $\X=(X,\mathcal{X},\mu,(T_{\alpha})_{\alpha\in \mathbb{R}^{m}})$ are good for double Birkhoff averages:
\begin{itemize}
	\item Assani \cite{Ass}: every weakly mixing $\mathbb{R}^{m}$-system $\X$ such that for every $g\in\mathbb{R}^{m}$, the restriction of $T_{g}$ to the Pinsker algebra of $\X$ (the maximal sub $\sigma$-algebra on which  $T_{g}$ has zero entropy) has singular spectrum with respect to the Lebesgue measure;\footnote{\cite{Ass} only covered the case when $m=1$, but the general case can be deduced by a similar argument combined with results in \cite{H0}. It is worth noting that the results in \cite{Ass} was recently improved by Gutman, Huang, Shao and Ye \cite{G}.} 
	\item Bourgain \cite{B}: every $\mathbb{R}$-system (or every $\mathbb{R}^{m}$-system $\X=(X,\mathcal{X},\mu,(T_{\alpha})_{\alpha\in \mathbb{R}^{m}})$ for which there exist an $\mathbb{R}$-action $(S_{t})_{t\in\mathbb{R}}$ on $\X$ and $\beta\in\mathbb{R}^{m}$ such that $T_{\alpha}=S_{\alpha\cdot\beta}$ for all $\alpha\in\mathbb{R}^{m}$).
	\item Donoso and Sun \cite{DS}: every distal  $\mathbb{R}^{m}$-system $\X$.\footnote{The case $m=1$ was proved by Huang, Shao and Ye \cite{HSY}.}
\end{itemize}

For completeness we recall the definition of distal systems (and refer the readers to \cite{Glas} Chapter 10 for further details).
Let $G$ be a group and $\pi\colon \X=(X,\mathcal{X},\mu,(T_{g})_{g\in G})\to \Y=(Y,\mathcal{Y},\nu,(S_{g})_{g\in G})$ be a factor map between two $G$-systems. We say $\pi$ is an \emph{isometric extension} (or $\X$ is an \emph{isometric extension} of $\Y$) if there exist a compact group $H$, a closed subgroup $\Gamma$ of $H$, and a cocycle $\rho\colon G \times Y\to H$ such that $(X,\mathcal{X},\mu,(T_{g})_{g\in G})\cong (Y\times H/\Gamma,\mathcal{Y}\times\mathcal{H}, \nu\times m, (T_{g})_{g\in G})$, where $m$ is the Haar measure on $H/\Gamma$, $\mathcal{H}$ is the Borel $\sigma$-algebra on $H/\Gamma$, and that for all $g \in G$ and $(y,a\Gamma)\in Y\times H/\Gamma$, we have
$$T_{g}(y,a\Gamma)=(S_{g}y, \rho(g,y)a\Gamma).$$

\begin{defn}\label{dis}
	Let $\X$ be a $G$-system. We say that $\X$ is \emph{distal} if there exist a countable ordinal $\eta$ and a directed family of factors $\X_{\theta}, \theta\leq\eta$ of $\X$ such that
	\begin{enumerate}
		\item  $\X_{0}$ is the trivial system, and $\X_{\eta}=\X$;
		\item  For $\theta<\eta$, the extension $\pi_{\theta}\colon \X_{\theta+1}\to \X_{\theta}$ is isometric and is not an isomorphism;
		\item  For a limit ordinal $\l\leq \eta$, $\X_{\l}=\lim\limits_{\leftarrow \theta<\l} \X_{\theta}$.
	\end{enumerate}
\end{defn}

As a result of \cite{DS}, we have the following applications of Theorems \ref{main} and \ref{main3}:
\begin{prop}\label{p1}
	Let $m\in\mathbb{N}$ and $\nu$ be a Borel measure on $\mathbb{R}^{m}$.
	Then (i) $\nu$ is weakly equidistributed for an ergodic distal $\mathbb{R}^{m}$-system $\X$ if and only if $\nu(\G+\beta)=0$ for every $\beta\in\mathbb{R}^{m}$. (ii) $\nu$ is weakly equidistributed for all ergodic distal $\mathbb{R}^{m}$-systems  if and only if $\nu(\ell)=0$ for all hyperplanes $\ell$ of $\mathbb{R}^{m}$.
\end{prop}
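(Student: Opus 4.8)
The plan is to derive Proposition \ref{p1} from Theorems \ref{main} and \ref{main3}, using as the only extra input the fact that every ergodic distal $\mathbb{R}^{m}$-system is good for double Birkhoff averages. First I would record this input: by the theorem of Donoso and Sun \cite{DS}, for every distal $\mathbb{R}^{m}$-system $\X$, all $f_{1},f_{2}\in L^{\infty}(\mu)$ and all $\alpha_{1},\alpha_{2}\in\mathbb{R}^{m}$, the average (\ref{19}) converges for $\mu$-a.e.\ $x$; combined with Lemma \ref{18}, this shows that every distal $\mathbb{R}^{m}$-system, in particular every ergodic distal one, is good for double Birkhoff averages.

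Given this, part (i) is immediate. If $\nu(W(\X)+\beta)=0$ for every $\beta\in\mathbb{R}^{m}$, then since $\X$ is good for double Birkhoff averages, the first part of Theorem \ref{main3} gives that $\nu$ is weakly equidistributed for $\X$. Conversely, if $\nu(W(\X)+\beta)\neq 0$ for some $\beta$, then the second part of Theorem \ref{main3}, which is valid for every ergodic $\mathbb{R}^{m}$-system, shows that $\nu$ is not weakly equidistributed for $\X$.

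For part (ii), the ``if'' direction follows from part (i) and Theorem \ref{geo}: if $\nu(\ell)=0$ for all hyperplanes $\ell$, then for any ergodic distal $\X$, Theorem \ref{geo} writes $W(\X)$ as a countable union of proper subspaces, so $W(\X)+\beta$ is a countable union of affine subspaces of dimension at most $m-1$, each contained in an affine hyperplane; hence $\nu(W(\X)+\beta)=0$ for every $\beta$, and part (i) gives that $\nu$ is weakly equidistributed for this (arbitrary) $\X$. For the ``only if'' direction, suppose $\nu(\ell)\neq 0$ for some hyperplane $\ell=\{\alpha\in\mathbb{R}^{m}\colon \alpha\cdot\beta_{0}=c\}$ with $\beta_{0}\neq 0$. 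I would take the system $\X_{0}$ used in the proof of Theorem \ref{main}: $X=\mathbb{T}^{1}$ with the $\mathbb{R}^{m}$-action $T_{\alpha}=S_{\pi(\alpha)}$, where $\pi(\alpha)=\alpha\cdot\beta_{0}$ and $(S_{s})_{s\in\mathbb{R}}$ is the translation flow on $\mathbb{T}^{1}$. This system is ergodic with $W(\X_{0})=\ker\pi$, so $\nu(W(\X_{0})+\gamma_{0})=\nu(\ell)\neq 0$ whenever $\gamma_{0}\cdot\beta_{0}=c$, and part (i) shows $\nu$ is not weakly equidistributed for $\X_{0}$.

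The only point that needs checking, and it is the main (indeed the only) obstacle, is that $\X_{0}$ is distal: this holds because $\X_{0}$ is an isometric extension of the trivial system, with $H=\mathbb{T}^{1}$, $\Gamma$ trivial, and cocycle $\rho(\alpha,\cdot)=\pi(\alpha)$, so it satisfies Definition \ref{dis} with $\eta=1$. Since this verification is entirely routine for a compact group rotation, the proof is complete once it is made.
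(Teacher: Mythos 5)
Your proposal is correct and follows essentially the same route as the paper: deduce goodness for double Birkhoff averages from the Donoso--Sun pointwise convergence result via Lemma \ref{18} (the paper makes explicit the intermediate step of restricting to the $\mathbb{Z}$-span of $\alpha_{1},\alpha_{2}$ to get a distal $\mathbb{Z}^{2}$-system before invoking \cite{DS}), then apply both parts of Theorem \ref{main3} for (i) and Theorem \ref{main} together with the distality of the counterexample system $\X_{0}$ for (ii). Your explicit verification that $\X_{0}$ is an isometric extension of the trivial system is exactly the routine check the paper asserts without detail.
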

\begin{proof}
	Fix $\alpha_{1},\alpha_{2}\in\mathbb{R}^{m}$ and let $G'$ denote the $\mathbb{Z}$-span of $\alpha_{1},\alpha_{2}$. Then it is easy to see by definition that if $\X=(X,\mathcal{X},\mu,(T_{\alpha})_{\alpha\in\mathbb{R}^{m}})$ is a distal $\mathbb{R}^{m}$-system, then   $(X,\mathcal{X},\mu,(T_{\alpha})_{\alpha\in G'})$	is a distal $\mathbb{Z}^{2}$-system. By \cite{DS}, the limit (\ref{19}) exists for all $f_{1},f_{2}\in L^{\infty}(\mu)$ and $\mu$-a.e. $x\in X$. By Lemma \ref{18}, $\X$ is good for double Birkhoff averages, and so the ``if" parts of (i) and (ii) follow from the first part of Theorem \ref{main3} and the ``if" part  of Theorem \ref{main}, respectively. The ``only if" part of (i) follows from the second part of Theorem \ref{main3}  (which is valid for every $\mathbb{R}^{m}$-system). The ``only if" part of (ii) follows from the ``only if" of Theorem \ref{main} as the system $\X_{0}$ constructed in the proof of Theorem \ref{main} is distal.
\end{proof}

Using the result of \cite{B}, we can deduce Proposition \ref{main4} from Theorem \ref{main3}:
\begin{proof}[Proof of Proposition \ref{main4}]
	Let $\nu$ be a Borel measure on $\mathbb{R}$ and $\X$ be an ergodic $\mathbb{R}$-system. By \cite{B}, $\X$ is good for double Birkhoff averages. Then by Theorem \ref{main3},
	$\nu$ is weakly equidistributed for $\X$ if and only if $\nu(W(\X)+\beta)=0$ for all $\beta\in\mathbb{R}$. Since $\X$ is an ergodic $\mathbb{R}$-system, it is easy to see that $W(\X)=\{0\}$. This finishes the proof.
\end{proof}

\end{document}